\newtheorem{Thm}{Theorem}[section]
\newtheorem{Cor}[Thm]{Corollary}
\newtheorem{Lem}[Thm]{Lemma}
\newtheorem{Prop}[Thm]{Proposition}
\theoremstyle{definition}
\theoremstyle{remark}
\def\ldots{\mathinner{\ldotp\ldotp\ldotp}}
\def\cdots{\mathinner{\cdotp\cdotp\cdotp}}
\def\tfrac{\textstyle\frac}
\def\C{\mathcal C}
\def\tfrac#1#2{{\textstyle\frac#1#2}}
\def\rank{\text{rank }}
\begin{document}

\title{Asymptotic unconditionality}

\author{S. R. Cowell}

\author{N. J. Kalton}

\address{Department of Mathematics \\
University of Missouri-Columbia \\
Columbia, MO 65211 }

\email{simon@math.missouri.edu}
\email{nigel@math.missouri.edu}

\subjclass{Primary: 46B03, 46B20}

\thanks{The authors were supported by NSF grant DMS-0555670}

\begin{abstract}  We show that a separable real Banach space embeds almost isometrically in a space $Y$ with a shrinking 1-unconditional basis if and only if $\lim_{n\to\infty}\|x^*+x_n^*\|=\lim_{n\to\infty}\|x^*-x_n^*\|$ whenever $x^*\in X^*$, $(x_n^*)_{n=1}^{\infty}$ is a weak$^*$-null sequence and both limits exist. If $X$ is reflexive then $Y$ can be assumed reflexive. These results provide the isometric counterparts of recent work of Johnson and Zheng. \end{abstract}

\maketitle

\section{Introduction}\setcounter{equation}{0}

In this paper we consider only real Banach spaces.
Recently, Johnson and Zheng \cite{JohnsonZheng2008} gave an intrinsic characterization of  separable  Banach spaces which embed isomorphically into a reflexive  Banach space with unconditional basis.  Precisely a separable reflexive Banach space $X$ embeds into a (reflexive) Banach space with unconditional basis if and only if $X$ has the unconditional tree property (UTP), i.e. for some $C$, every weakly null tree has a $C$-unconditional branch.
The use of tree properties to describe subspaces of certain Banach spaces is a recent development in Banach space theory which originates in \cite{Kalton2001} and was later developed in \cite{OdellSchlumprecht2002}.

The results of \cite{Kalton2001} and \cite{OdellSchlumprecht2002} are both, in a certain sense, isomorphic versions of earlier isometric results from \cite{KaltonWerner1995}.  In the latter paper, for $1<p<\infty$, it is shown that if $X$ is  a separable Banach space containing no copy of $\ell_1$, then  $X$ $(1+\delta)-$embeds in an $\ell_p-$sum of finite-dimensional spaces for every $\delta>0$ if and only if
$$\lim_{n\to\infty}(\|x+x_n\|^p-\|x\|^p-\|x_n\|^p)=0$$ whenever $x\in X$ and $(x_n)_{n=1}^{\infty}$ is a weakly null sequence.  Similarly, again assuming $X$ is separable and contains no copy of $\ell_1$, $X$ $(1+\delta)-$embeds into $c_0$ for every $\delta>0$ if and only if
$$ \lim_{n\to\infty}(\|x+x_n\|-\max(\|x\|,\|x_n\|))=0$$ whenever $x\in X$ and $(x_n)_{n=1}^{\infty}$ is weakly null.

In \cite{Kalton2001} it was shown that a separable Banach space $X$, containing no copy of $\ell_1,$ embeds isomorphically into $c_0$ if and only if every weakly null tree has a $c_0$-branch; the corresponding  result for $1<p<\infty$ was given in \cite{OdellSchlumprecht2002} where it was shown that a reflexive Banach space $X$ embeds isomorphically into an $\ell_p-$sum of finite-dimensional spaces if and only if every weakly null tree has an $\ell_p$-branch.  We remark that in \cite{Kalton2001} the proof of the isomorphic result was given by renorming and reducing to a situation very similar to the isometric result.

The aim of this paper is to prove an isometric analogue of the Johnson-Zheng theorem. We say that a separable Banach space $X$ has {\it property $(au)$} if given any $x\in X$ and $\delta>0$ there is a closed subspace $F$ of finite codimension such that
$$ \|x-y\|\le (1+\delta)\|x+y\|, \qquad y\in F.$$  This could be restated as
$$ \lim_{d\in D}(\|x+x_d\|-\|x-x_d\|)=0$$ whenever $x\in X$ and $(x_d)_{d\in D}$ is a bounded weakly null net.  If $X$ has separable dual we may replace nets by sequences in this definition.
There is also a natural dual notion; a separable Banach space $X$ has {\it property $(au^*)$} if given any $x^*\in X^*$ and $\delta>0$ there is a weak$^*$ closed subspace $F$ of finite codimension in $X^*$ such that
$$ \|x^*-y^*\|\le (1+\delta)\|x^*+y^*\|, \qquad y^*\in F.$$  This is equivalent to
 $$ \lim_{n\to\infty}(\|x^*+x^*_n\|-\|x^*-x^*_n\|)=0$$ whenever $x^*\in X^*$ and $(x^*_n)_{n=1}^{\infty}$ is a weak$^*$ null sequence in $X^*$.  Both these concepts already exist in the literature under different names (see \cite{Sims1994} and \cite{Lima1995}).
It is easy to show that $(au^*)$ implies $(au)$ (Proposition \ref{*-auau} below) but the converse is false (take $X=\ell_1$).

Our main result (Theorem \ref{main}) is that a separable Banach space $X$ has property $(au^*)$ if and only if for every $\delta>0$ there is a Banach space $Y$ with a shrinking 1-unconditional basis and a subspace $X_{\delta}$ of $Y$ with $d(X,X_{\delta})<1+\delta;$ $Y$ may be assumed reflexive when $X$ is reflexive.  A special case of this theorem was already implicit in the literature.  Recall that a separable Banach space $X$ has the {\it unconditional metric approximation property (UMAP)} \cite{CasazzaKalton1990} if there is a sequence of finite rank operators such that $\lim_{n\to\infty}T_nx=x$ for $x\in X$ and $\lim_{n\to\infty}\|I-2T_n\|=1;$ if additionally $\lim_{n\to\infty}T_n^*x^*=x^*$ for $x^*\in X^*$ we say that $X$ has shrinking (UMAP).  Lima \cite{Lima1995} showed that if $X$ is a separable Banach space with property $(au^*)$ and such that $X^*$ has the approximation property then $X$ has (shrinking) (UMAP).
In \cite{GodefroyKalton1997} (Corollary IV.4) it is shown that if $X$ has shrinking (UMAP) then $X$ can be $(1+\delta)-$embedded in a space with a shrinking 1-unconditional basis; unfortunately the proof of this result is inaccurate (as Haskell Rosenthal has pointed out to us) and we give a corrected proof below (contained in Proposition \ref{embedding}).  Thus the novelty in Theorem \ref{main} is the removal of the approximation property hypothesis.
Let us also remark at this point that Johnson and Zheng \cite{JohnsonZhenginprep} have informed us that they have extended the methods of \cite{JohnsonZheng2008} to show that a separable Banach space $X$ with separable dual embeds isomorphically into a space with a shrinking unconditional basis if and only if $X^*$ has the weak$^*$-(UTP).  This provides a complete isomorphic analogue of Theorem \ref{main}.

If $X$ is reflexive $(au)$ is equivalent to $(au^*)$ and so Theorem \ref{main} could be restated using property $(au).$
We conjecture that if $X$ contains no copy of $\ell_1$ then $(au)$ and $(au^*)$ are equivalent.  We are not quite able to prove this, but we do prove a result very close to it.  We say that a separable Banach space has {\it property (WABS) (weak alternating Banach-Saks property)} if given any bounded sequence $(x_n)_{n=1}^{\infty}$ we can find a sequence of convex blocks $(y_n)_{n=1}^{\infty}$ such that
$$ \lim_{n\to\infty}\sup_{r_1<r_2<\cdots<r_n}\|\frac1n\sum_{j=1}^n(-1)^jy_{r_j}\|=0.$$
This condition is implied by reflexivity or the Alternating Banach-Saks property.  Then $X$ has property $(au^*)$ if and only if $X$ has property $(au)$ and (WABS).  The example of the James space \cite{James1951a} shows then there is a space with separable dual and (UTP) which has no equivalent renorming to have property $(au)$.

{\bf Acknowledgements.}  We would like to thank Gilles Godefroy, Vegard Lima  and Lova Randrianarivony for some helpful comments.

\vskip.5truein

\section{Asymptotic unconditionality}\setcounter{equation}{0}

Let $X$ be a separable Banach space.  we will say that $X$ is {\it asymptotically unconditional (au)} if given any $x\in X$ and $\delta>0$ there is a closed finite co-dimensional subspace $W$ of $X$ such that
$$ \|x-w\|\le (1+\delta)\|x+w\|, \qquad w\in W.$$
An alternative formulation of this condition is that
$$ \lim_{d\in D}\left(\|x+u_d\|-\|x-u_d\|\right)=0$$ whenever $x\in X$ and $(u_d)_{d\in D}$ is a
 bounded weakly null net.

We shall say that $X$ is {\it sequentially asymptotically unconditional ($\omega$-au)} if $$ \lim_{n\to\infty}(\|x+u_n\|-\|x-u_n\|)=0$$ whenever $x\in X$ and $(u_n)_{n=1}^{\infty}$ is weakly null sequence.  This condition has already been considered by Sims \cite{Sims1994} under the acronym WORTH.  Note that if $X^*$ is separable then the weak topology is metrizable on bounded sets and so $X$ is ($\omega$-au) if and only if $X$ is (au).

We shall say that $X$ is {\it *-asymptotically unconditional (au$^*$)} if
$$ \lim_{n\to\infty}(\|x^*+x^*_n\|-\|x^*-x^*_n\|)=0$$ whenever $x^*\in X^*$ and $(x^*_n)_{n=1}^{\infty}$ is a weak$^*$-null sequence in $X^*$.  This condition has been considered under the name $(wM^*)$ by Lima \cite{Lima1995}; later Oja \cite{Oja2000} considered a family of more general conditions.
Since $X$ is assumed separable, the weak$^*$-topology on bounded sets is metrizable, and so $X^*$ is *-asymptotically unconditional if and only if either
given any $x^*\in X^*$ and $\epsilon>0$ there is a weak$^*$-closed finite co-dimensional subspace $W$ of $X^*$ such that
$$ \|x^*-w^*\|\le (1+\epsilon)\|x^*+w^*\|, \qquad w^*\in W,$$
or,  alternatively,
$$ \lim_{d\in D}(\|x^*+u^*_d\|-\|x^*-u^*_d\|)=0$$ whenever $x^*\in X$ and $(u^*_d)_{d\in D}$ is a bounded weak$^*$-null net.

We first state a very simple principle based on compactness that will be used frequently:

\begin{Lem}\label{principle} (i) Let $X$ be a separable Banach space with property $(au)$.  Then given any finite-dimensional subspace $E$ of $X$ and $\delta>0$ there is a closed finite codimensional subspace $F$ of $X$ such that $$\|e-f\|\le (1+\delta)\|e+f\|, \qquad e\in E, f\in F.$$
(ii) Let $X$ be a separable Banach space with property $(au^*)$.  Then given any finite-dimensional subspace $E$ of $X^*$ and $\delta>0$ there is a weak$^*$-closed finite codimensional subspace $F$ of $X^*$ such that $$\|e^*-f^*\|\le (1+\delta)\|e^*+f^*\|, \qquad e^*\in E, f^*\in F.$$\end{Lem}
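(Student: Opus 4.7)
The plan is to prove both parts by contradiction. Assume the conclusion of (i) fails; then for each closed finite-codimensional subspace $F \subset X$ one can choose $e_F \in E$ and $f_F \in F$ with $\|e_F - f_F\| > (1+\delta)\|e_F + f_F\|$, and by homogeneity normalize so that $\max(\|e_F\|,\|f_F\|)=1$. Order these subspaces by reverse inclusion into a directed set $D$. The crucial observation is that the net $(f_F)_{F\in D}$ is bounded and automatically weakly null: for any $x^*\in X^*$ the subspace $\ker x^*$ lies in $D$, and whenever $F\in D$ refines $\ker x^*$ we have $f_F\in F\subseteq\ker x^*$, so $x^*(f_F)=0$ eventually.

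Since $E$ is finite-dimensional its closed unit ball is compact; passing to a subnet I may assume $e_F\to e_0$ in norm, and also that the limits $\alpha:=\lim\|e_F+f_F\|$ and $\beta:=\lim\|e_F-f_F\|$ exist, with $\beta\ge(1+\delta)\alpha$. Property $(au)$ applied to $e_0$ and the bounded weakly null net $(f_F)$ yields $\|e_0+f_F\|-\|e_0-f_F\|\to 0$; combined with the estimate $|\|e_F\pm f_F\|-\|e_0\pm f_F\||\le\|e_F-e_0\|\to 0$, this forces $\alpha=\beta$, and hence $\alpha=\beta=0$. But then $2\|e_F\|\le\|e_F+f_F\|+\|e_F-f_F\|\to 0$, and likewise $\|f_F\|\to 0$, contradicting $\max(\|e_F\|,\|f_F\|)=1$.

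Part (ii) follows the same template, with $X^*$ in place of $X$, the weak$^*$-topology in place of the weak topology, and $(au^*)$ in place of $(au)$. The directed set is now the collection of weak$^*$-closed finite-codimensional subspaces of $X^*$, equivalently the annihilators $V^\perp$ of finite-dimensional $V\subset X$; the weak$^*$-nullity of the selected net $(f^*_F)$ is verified in the same way, using the fact that $\{v\}^\perp\in D$ for each $v\in X$.

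The main obstacle is selecting the correct normalization. Fixing $\max(\|e_F\|,\|f_F\|)=1$, rather than $\|e_F+f_F\|=1$ or similar, simultaneously keeps both families bounded, allows the compactness argument on the finite-dimensional space $E$ to extract $e_0$, and makes the eventual contradiction quantitative. Once the normalization and the verification that $(f_F)$ is weakly (resp.\ weak$^*$) null are in place, the rest is a clean application of $(au)$ (resp.\ $(au^*)$) together with the triangle inequality.
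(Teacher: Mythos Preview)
Your proof is correct. The paper actually gives no proof of this lemma at all; it merely announces it as ``a very simple principle based on compactness that will be used frequently'' and moves on. Your contradiction argument is a clean realization of that principle: the compactness appears when you extract the limit $e_0$ from the unit ball of the finite-dimensional space $E$, and the rest is a direct application of the net formulation of $(au)$ (respectively $(au^*)$), both of which the paper explicitly records as equivalent to the subspace formulation.

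For comparison, the argument the authors likely had in mind is the direct one: take a finite $\epsilon$-net $\{e_1,\dots,e_n\}$ in the unit sphere of $E$, apply $(au)$ to each $e_i$ with a slightly smaller parameter $\eta$ to obtain finite-codimensional $F_i$, and set $F=\bigcap_{i=1}^n F_i$. One then checks that for $e\in S_E$ and $f\in F$ the inequality $\|e-f\|\le(1+\delta)\|e+f\|$ holds, using the symmetric bound $\|e_i+f\|\ge 2/(2+\eta)$ (which follows from $\|e_i\pm f\|\le(1+\eta)\|e_i\mp f\|$ and $\|e_i-f\|+\|e_i+f\|\ge 2$) to convert the additive perturbation error into a multiplicative one. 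This route is constructive but requires some bookkeeping with constants; your net-and-subnet argument trades that bookkeeping for a soft contradiction and is arguably tidier. Both proofs hinge on exactly the same two ingredients: compactness of $B_E$ and the single-vector definition of $(au)$/$(au^*)$.
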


The following result is a consequence of \cite{Lima1995} Proposition 4.1, but we give an independent proof.

\begin{Prop}\label{separabledual} If $X$ is a separable  Banach space with (au*) then $X^*$ has no proper norming subspace and hence is separable.\end{Prop}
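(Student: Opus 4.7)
The plan is to argue by contradiction using a bidual construction. Suppose $Y\subsetneq X^*$ is a proper closed $1$-norming subspace. Since $Y$ is $1$-norming, $\overline{B_Y}^{w^*}=B_{X^*}$ (by the bipolar theorem), and by weak$^*$-metrizability of $B_{X^*}$ (separability of $X$), any $x^*\in X^*\setminus Y$ with $\|x^*\|=1$ is the weak$^*$-limit of some sequence $(y^*_n)\subseteq B_Y$; weak$^*$-lower semicontinuity forces $\|y^*_n\|\to 1$. Set $d:=\mathrm{dist}(x^*,Y)>0$ and $u^*_n:=x^*-y^*_n\to 0$ weak$^*$. By Hahn-Banach in $X^*/Y$, fix $x^{**}\in X^{**}$ with $\|x^{**}\|=1$, $x^{**}|_Y=0$, and $x^{**}(x^*)=d$; in particular $x^{**}(u^*_n)=x^{**}(x^*)-x^{**}(y^*_n)=d$ for every $n$.

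By Goldstine's theorem pick a net $(z_\alpha)\subseteq B_X$ with $z_\alpha\to x^{**}$ weak$^*$ in $X^{**}$; then $x^*(z_\alpha)\to d$ and $u^*_n(z_\alpha)\to d$ for each fixed $n$. A diagonal choice $\alpha(n)$ with $|u^*_n(z_{\alpha(n)})-d|<1/n$ and $|x^*(z_{\alpha(n)})-d|<1/n$ gives $(x^*+u^*_n)(z_{\alpha(n)})\to 2d$, whence $\liminf_n\|x^*+u^*_n\|\ge 2d$. On the other hand $\|x^*-u^*_n\|=\|y^*_n\|\to 1$, and $(au^*)$ applied to $x^*$ and $(u^*_n)$ yields $\lim_n(\|x^*+u^*_n\|-\|x^*-u^*_n\|)=0$, so $\lim\|x^*+u^*_n\|=1$. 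Thus $2d\le 1$: for every $\xi\in S_{X^*}\setminus Y$ we have $\mathrm{dist}(\xi,Y)\le\tfrac12$.

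The contradiction follows by iteration. Since $\|2x^*-y^*_n\|=\|x^*+u^*_n\|\to 1$, for $N$ large the normalization $v^*:=(2x^*-y^*_N)/\|2x^*-y^*_N\|$ lies in $S_{X^*}\setminus Y$ and has $\mathrm{dist}(v^*,Y)=2d/\|2x^*-y^*_N\|$, which tends to $2d$. Applying the previous step's bound to $v^*$ yields $2d\le\tfrac12$, i.e.\ $d\le\tfrac14$. Iterating $k$ times gives $d\le 2^{-(k+1)}$ for every $k\ge 0$, forcing $d=0$, a contradiction. Hence $Y=X^*$. The separability of $X^*$ now follows from the standard construction of a separable closed $1$-norming subspace: a dense sequence $(x_n)\subseteq X$ together with $(x^*_n)\subseteq S_{X^*}$ satisfying $x^*_n(x_n)\ge(1-1/n)\|x_n\|$ yields such a subspace, which by the first conclusion equals $X^*$. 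The main obstacle is the diagonal selection of $\alpha(n)$ in the second paragraph, which must simultaneously control two pairings against $x^{**}$ as the weak$^*$-limit of $(z_\alpha)$; this uses the standard fact that the weak$^*$-topology on $X^{**}$ is defined by finite collections of functionals in $X^*$, so the requisite cofinal neighbourhoods exist for each $n$.
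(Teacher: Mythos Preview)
Your proof is correct, and its core mechanism is exactly the paper's: with $u_n^*=x^*-y_n^*$ weak$^*$-null, property $(au^*)$ forces $\lim\|y_n^*\|=\lim\|2x^*-y_n^*\|$, while the left side is at most $1$ and the right side is at least $2d$.

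That said, you work considerably harder than necessary in two places. First, the Goldstine/diagonal argument is superfluous: since $y_n^*\in Y$, one has immediately $\|2x^*-y_n^*\|\ge d(2x^*,Y)=2d$ (equivalently, just evaluate the functional $x^{**}$ you already chose on $2x^*-y_n^*$ to get $2d$). Second, the iteration is avoidable: the paper simply starts, via Riesz's lemma, with an $x^*\in S_{X^*}$ satisfying $d(x^*,Y)>\tfrac12$, so that $2d>1$ yields the contradiction in one stroke. Your iteration is in effect reproving that one may push $d$ arbitrarily close to $1$, which is what Riesz's lemma supplies directly.
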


\begin{proof} Let $M$ be a norming subspace of $X^*$.  If $M\neq X^*$ then there exists $x^*\in X^*$ with $\|x^*\|=1$ and $d(x^*,M)=d>1/2.$  Let $(x_n^*)$ be a sequence in $B_{X^*}\cap M$ which weak$^*$-converges to $x^*.$  Then
$$ 1\ge \lim_{n\to\infty}\|x_n^*\|=\lim_{n\to\infty}\|x^*+(x_n^*-x^*)\|=\lim_{n\to\infty}\|2x^*-x_n^*\|\ge 2d>1.$$  This contradiction gives the result.\end{proof}

\begin{Prop}\label{*-auau} Suppose $X$ is a separable Banach space.  Then\newline
(a) If $X$ has (au*) then $X$ has (au).\newline
(b) If $X$ is reflexive then $X$ has (au*) if and only if $X$ has (au).
\end{Prop}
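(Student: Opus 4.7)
The plan is to reduce both parts to a single duality argument: pick a norming functional (for (a)) or norming vector (for (b)) for one of the two quantities, use the hypothesis on the \emph{opposite} space to perturb it slightly, and then test against the perturbed object to lower-bound the other quantity.

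For (a), by Proposition \ref{separabledual} we know $X^*$ is separable, so the weak topology is metrizable on bounded subsets of $X$ and it suffices to treat sequences. Fix $x\in X$ and a weakly null sequence $(u_n)$ in $X$, and by passing to a subsequence assume $\|x+u_n\|\to\alpha$ and $\|x-u_n\|\to\beta$; the goal is $\alpha=\beta$. Pick $x_n^*\in B_{X^*}$ with $x_n^*(x+u_n)=\|x+u_n\|$ and extract a subsequence converging weak$^*$ to some $x^*$. Then $w_n^*:=x_n^*-x^*$ is weak$^*$-null, so property $(au^*)$ applied to $x^*$ and $(w_n^*)$ yields $\|x_n^*\|-\|2x^*-x_n^*\|\to 0$, hence $\|2x^*-x_n^*\|\to 1$. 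Using the norming identity $x_n^*(x)+x_n^*(u_n)=\|x+u_n\|$, a direct computation gives
\[
(2x^*-x_n^*)(x-u_n)=2(x^*-x_n^*)(x)-2x^*(u_n)+\|x+u_n\|,
\]
whose limit is $\alpha$ since $(x^*-x_n^*)(x)=-w_n^*(x)\to 0$ and $x^*(u_n)\to 0$. Testing $x-u_n$ against the unit functional $(2x^*-x_n^*)/\|2x^*-x_n^*\|$ then yields $\beta\ge\alpha$; running the same argument with $(-u_n)$ in place of $(u_n)$ gives the reverse inequality.

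For (b), the forward implication is (a). Conversely, since $X$ is separable and reflexive, $X^*$ is separable and weak$^*$-null sequences in $X^*$ coincide with weakly null ones, so sequences again suffice. Fix $x^*\in X^*$ and a weakly null $(x_n^*)\subset X^*$, and assume $\|x^*+x_n^*\|\to\alpha$, $\|x^*-x_n^*\|\to\beta$. Reflexivity supplies $x_n\in B_X$ with $(x^*+x_n^*)(x_n)=\|x^*+x_n^*\|$; by weak compactness of $B_X$, pass to a subsequence with $x_n\to x$ weakly and write $x_n=x+u_n$ with $(u_n)$ weakly null. Property $(au)$ applied to $x$ and $(u_n)$ gives $\|x_n\|-\|2x-x_n\|\to 0$, so $\|2x-x_n\|\to 1$, while the analogous expansion
\[
(x^*-x_n^*)(2x-x_n)=2x^*(x)-2x_n^*(x)+\|x^*+x_n^*\|-2x^*(x_n)
\]
tends to $\alpha$ (using $x_n^*(x)\to 0$ by reflexivity and $x^*(x_n)\to x^*(x)$). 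Testing against $(2x-x_n)/\|2x-x_n\|\in B_X$ yields $\beta\ge\alpha$, and symmetry completes the proof.

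The heart of both arguments is the ``functional-flipping'' step: the hypothesis on the opposite space ensures that a small weak$^*$-null (or weakly null) perturbation $w_n^*$ of a unit norming functional (or vector) satisfies $\|x^*+w_n^*\|\approx\|x^*-w_n^*\|$, which is exactly what is needed to swap the original norming object for its reflection at asymptotically no cost. The main minor subtlety is that the norming functional $x_n^*$ in (a) cannot itself be taken weak$^*$-null, so we must subtract off its weak$^*$ limit $x^*$ before applying $(au^*)$; the remainder is a direct computation.
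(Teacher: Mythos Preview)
Your proof is correct and follows essentially the same approach as the paper's: pick a norming functional for $x+u_n$, pass to a weak$^*$-convergent sub(net/sequence), write it as $x^*+w_n^*$ with $w_n^*$ weak$^*$-null, and use $(au^*)$ to flip the sign of $w_n^*$ at no asymptotic cost before testing against $x-u_n$. The only cosmetic differences are that the paper works directly with nets (so it never needs to invoke Proposition~\ref{separabledual}) and dispatches (b) in one line by observing it follows from (a) applied to $X^*$, rather than rerunning the dual argument explicitly as you do.
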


\begin{proof}  (a): It is enough to show that if $x\in X$ and $(u_d)_{d\in D}$ is a bounded weakly null net with
$$ \lim_{d\in D}\|x+u_d\|=1,\qquad \lim_{d\in D}\|x-u_d\|=\theta$$ then $\theta\ge 1.$  To do this we may by the Hahn-Banach theorem pick $(x^*_d)_{d\in D}$ with $x_d^*(x+u_d)=\|x+u_d\|$ and $\|x_d^*\|=1.$  We may then pass to a subnet and assume that $(x^*_d)_{d\in D}$ is weak$^*$-convergent to some $x^*$.  Let $x^*_d=x^*+u_d^*.$   Then
\begin{align*} 1&=\lim_{d\in D}(x^*(x)+x^*(u_d)+u_d^*(x)+u_d^*(u_d))\\
&=\lim_{d\in D}(x^*(x)-x^*(u_d)-u_d^*(x)+u_d^*(u_d))\\
&\le \limsup_{d\in D}\|x^*-u_d^*\|\|x-u_d\|\\
&= \theta.\end{align*}
This proves (a).

(b) is a trivial deduction from (a).\end{proof}

\begin{Prop} \label{subspaces} (i) If $X$ is a Banach space with a shrinking 1-unconditional UFDD then $X$ has (au*).\newline (ii) If $Y$ is a separable Banach space with (au*) then any subspace or quotient $X$ of $Y$  also has (au*).\end{Prop}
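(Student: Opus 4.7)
For (i), the plan is a direct UFDD computation. Let $(E_n)$ denote the shrinking $1$-unconditional UFDD of $X$, with associated projections $P_N$ onto $E_1\oplus\cdots\oplus E_N$ and $Q_N=I-P_N$. By a routine duality argument the sequence of adjoint ranges $(P_N^*X^*)_N$ is a $1$-unconditional UFDD of $X^*$, and the shrinking hypothesis translates to $\|Q_N^*x^*\|\to 0$ for every $x^*\in X^*$. To verify the sequential form of (au*), fix $x^*\in X^*$ and a weak*-null $(x_n^*)$ in $X^*$; given $\varepsilon>0$ choose $N$ with $\|Q_N^*x^*\|<\varepsilon$. Since $P_N^*$ has finite rank and weak*-convergence equals norm-convergence on the finite-dimensional range $P_N^*X^*$, one has $\|P_N^*x_n^*\|\to 0$. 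Writing
\[
x^*\pm x_n^* \;=\; \bigl(P_N^*x^*\pm Q_N^*x_n^*\bigr) + \bigl(Q_N^*x^*\pm P_N^*x_n^*\bigr),
\]
the second summand has norm at most $\varepsilon+o(1)$, and the two pieces of the first summand lie in complementary blocks of the $1$-unconditional UFDD of $X^*$, so $\|P_N^*x^*+Q_N^*x_n^*\|=\|P_N^*x^*-Q_N^*x_n^*\|$. Assembling yields $\limsup_n\bigl|\|x^*+x_n^*\|-\|x^*-x_n^*\|\bigr|\le 2\varepsilon$, and $\varepsilon$ is arbitrary.

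For the quotient half of (ii), let $X=Y/Z$ and $\pi\colon Y\to X$ be the quotient map. The adjoint $\pi^*\colon X^*\to Z^\perp\subset Y^*$ is an isometric weak*-to-weak*-continuous embedding, so a weak*-null sequence in $X^*$ lifts isometrically to a weak*-null sequence in $Y^*$. Applying (au*) of $Y$ at $\pi^*x^*$ and undoing the isometry gives the conclusion for $X$.

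The subspace case of (ii) is the substantive point. Given $X\subset Y$, $x^*\in X^*$, and a weak*-null $(x_n^*)\subset X^*$, pass to a subsequence with $\|x^*+x_n^*\|\to\alpha$ and $\|x^*-x_n^*\|\to\beta$; the goal is $\alpha=\beta$. The obstacle is that a Hahn--Banach lift of $(x_n^*)$ need not be weak*-null in $Y^*$, and naively subtracting its weak*-limit introduces a norm discrepancy one cannot control on the $+$ and $-$ sides simultaneously. The trick is to lift the test quantity rather than the sequence itself: choose $s_n\in Y^*$ extending $x^*+x_n^*$ with $\|s_n\|=\|x^*+x_n^*\|$, and use the separability of $Y$ to extract a weak*-convergent subsequence $s_n\xrightarrow{w^*}s^*$. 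Since $x_n^*\xrightarrow{w^*}0$, $s^*|_X=x^*$, and $\hat s_n:=s_n-s^*$ is weak*-null in $Y^*$ with $\hat s_n|_X=x_n^*$. Apply (au*) of $Y$ at the base point $s^*$ to the sequence $(\hat s_n)$: the difference $\|s^*+\hat s_n\|-\|s^*-\hat s_n\|$ tends to $0$, but $\|s^*+\hat s_n\|=\|s_n\|=\|x^*+x_n^*\|\to\alpha$, while $\|s^*-\hat s_n\|=\|2s^*-s_n\|\ge\|(2s^*-s_n)|_X\|=\|x^*-x_n^*\|\to\beta$. This forces $\beta\le\alpha$, and the reverse inequality is obtained by running the same argument with $(-x_n^*)$ in place of $(x_n^*)$.
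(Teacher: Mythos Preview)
Your proof is correct and follows essentially the same route as the paper's: part (i) and the quotient case of (ii) are dismissed there as ``clear,'' and for the subspace case the paper does exactly what you do---extend $x^*+x_n^*$ by Hahn--Banach with equal norm, pass to a weak$^*$-convergent subsequence $y_n^*\to y^*$, apply (au$^*$) of $Y$ to get $\|2y^*-y_n^*\|\to\lim\|y_n^*\|$, and restrict to $X$ for a contradiction. Your presentation is simply more explicit, framing the argument as $\beta\le\alpha$ plus symmetry rather than a direct contradiction.
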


\begin{proof} (i) is clear, as is (ii) for quotients.  Consider the case when $X$ is a subspace of $Y$.  Suppose $x^*\in X^*$ and $(u_n^*)$ is a weak$^*$ null sequence in $X^*$ such that $\lim_{n\to\infty}\|x^*+u_n^*\|=1$ but $\lim_{n\to\infty}\|x^*-u_n^*\|=1+\delta>1.$  Let $y_n^*\in Y^*$ be extensions to $Y$ with $\|y_n^*\|=\|x^*+u_n^*\|.$ Passing to a subsequence we can suppose $(y_n^*)$ converges weak$^*$ to $y^*.$  Then
$\lim_{n\to\infty}\|2y^*-y_n^*\|=1.$  However $(2y^*-y_n^*)|_X=x^*-u_n^*$ and we have a contradiction.\end{proof}

{\bf Remark.} Note that property $(au)$ does not pass to quotients since every separable Banach space is a quotient of $\ell_1.$

We close this section with a simple Lemma, which will be useful later.
\begin{Lem}\label{weaklynull}
(i) Let $X$ be a separable Banach space with property $(au)$,
 and suppose that $(x_n)_{n=1}^{\infty}$ is a weakly null sequence which is not norm convergent to $0$.  Then, given $\delta>0,$ there is a subsequence $(y_n)_{n=1}^{\infty}$ of $(x_n)_{n=1}^{\infty}$ such that the sequence $(y_n)_{n=1}^{\infty}$  is $(1+\delta)-$unconditional.\newline
 (ii) Let $X$ be a separable Banach space with property $(au^*)$, and suppose that $(x^*_n)_{n=1}^{\infty}$ is a weak$^*$-null sequence in $X^*$ which is not norm convergent to $0$.  Then, given $\delta>0,$ there is a subsequence $(y^*_n)_{n=1}^{\infty}$ of $(x^*_n)_{n=1}^{\infty}$ such that the sequence $(y^*_n)_{n=1}^{\infty}$  is $(1+\delta)-$unconditional.\end{Lem}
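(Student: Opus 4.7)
We sketch (i); part (ii) is analogous, using Lemma~\ref{principle}(ii), the separability of $X^{*}$ from Proposition~\ref{separabledual}, and the Principle of Local Reflexivity in place of Hahn--Banach extensions when working in $X^{*}$. First, pass to a subsequence with $\|x_n\| \ge c > 0$; then, by the Bessaga--Pelczynski selection principle, thin out once more to a basic subsequence with some finite basis constant $K$. Fix $\delta > 0$, choose $\delta_n > 0$ with $\prod_n(1+\delta_n) \le 1+\delta/4$, and set $\mu_n = \mu \cdot 2^{-n}$ for some small $\mu > 0$ to be pinned down at the end.

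\emph{Construction.} Inductively select a further subsequence $(y_n)$ and a decreasing chain $F_1 \supseteq F_2 \supseteq \cdots$ of closed finite-codimensional subspaces of $X$ so that, writing $E_n = \text{span}(y_1, \ldots, y_n)$: (a)~$\|e-f\| \le (1+\delta_n)\|e+f\|$ for $e \in E_n$, $f \in F_n$ (from Lemma~\ref{principle}(i)); and (b)~$\text{dist}(y_{n+1}, F_n) \le \mu_n$, which is achievable because $X/F_n$ is finite-dimensional and the sequence is weakly null. Decompose $y_{n+1} = f_{n+1} + z_{n+1}$ with $f_{n+1} \in F_n$ and $\|z_{n+1}\| \le \mu_n$. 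As a subsequence of a basic sequence, $(y_n)$ is still basic with constant at most $K$, so whenever $v = \sum a_i y_i$ the basis inequality gives $|a_i| \le (2K/c)\|v\|$.

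\emph{Suffix-flip estimate.} Fix $v = \sum_{i=1}^N a_i y_i$ and signs $(\varepsilon_i)$, and put $\varepsilon_0 = +1$. Let $S = \{j \ge 1 : \varepsilon_j \ne \varepsilon_{j-1}\} = \{j_1 < \cdots < j_m\}$; the target pattern is obtained from $(+, \ldots, +)$ by performing successive ``suffix flips'' at $j_1, \ldots, j_m$, where the suffix flip at $j$ reverses signs at all positions $\ge j$. Let $v^{(\ell)}$ denote the result after $\ell$ such flips, so $v^{(0)} = v$ and $v^{(m)} = \sum \varepsilon_i a_i y_i$. At step $\ell$ split $v^{(\ell-1)} = E + G$ with $E \in E_{j_\ell - 1}$ and $G$ the signed tail from $j_\ell$ on; then $v^{(\ell)} = E - G$. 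Replacing each $y_i$ ($i \ge j_\ell$) by $f_i \in F_{i-1} \subseteq F_{j_\ell - 1}$ writes $G = G' + Z$ with $G' \in F_{j_\ell - 1}$ and
\[ \|Z\| \le \tfrac{2K}{c}\|v\|\sum_{i \ge j_\ell}\mu_{i-1} \le C\mu\|v\|\cdot 2^{-j_\ell} \]
(with $C = 8K/c$). Applying (a) to $E$ and $G'$ gives $\|E-G'\| \le (1+\delta_{j_\ell - 1})\|E+G'\|$; triangle inequality then yields $\|v^{(\ell)}\| \le (1+\delta_{j_\ell - 1})\|v^{(\ell-1)}\| + 3\|Z\|$. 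Iterating over $\ell = 1, \ldots, m$ and using that the $j_\ell$ are distinct positive integers (so $\sum_\ell 2^{-j_\ell} \le 1$) yields $\|v^{(m)}\| \le (1+\delta/4)(1 + 3C\mu)\|v\| \le (1+\delta)\|v\|$ once $\mu$ is taken small enough.

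\emph{Main obstacle.} The critical point is preventing compounding of the additive errors across the up to $N$ suffix-flip steps, since $N$ is unbounded. The geometric choice $\mu_n = \mu \cdot 2^{-n}$ together with the basic-sequence bound on $|a_i|$ confines the step-$\ell$ error to $\|v\| \cdot 2^{-j_\ell}$; distinctness of the $j_\ell$ then makes these contributions sum to a bounded multiple of $\|v\|$ that is independent of $N$, which is what permits a single choice of $\mu$ to work uniformly.
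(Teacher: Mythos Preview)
Your proof is correct and follows the same strategy as the paper's: pass to a basic subsequence, use Lemma~\ref{principle} to obtain finite-codimensional subspaces $F_n$ on which the sign flip is almost isometric relative to $E_n=[y_1,\ldots,y_n]$, choose $y_{n+1}$ close to $F_n$, and then reach an arbitrary sign pattern by successive suffix flips. The paper's bookkeeping differs only in that it controls the distance of each $y_n$ to \emph{all} earlier $F_j$ simultaneously, so that the single-flip estimate becomes purely multiplicative (no additive error to sum), and for (ii) no appeal to local reflexivity is needed---the argument transfers verbatim to $X^*$ using Lemma~\ref{principle}(ii) and weak$^*$-closed finite-codimensional subspaces.
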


\begin{proof} The proofs of these statements are essentially identical so we prove only (i).

 We may suppose, by passing to a subsequence, that $(x_n)_{n=1}^{\infty}$ is basic (see e.g. \cite{AlbiacKalton2006} Theorem 1.5.2). Let $K$ be the basis constant for the sequence $(x_n)_{n=1}^{\infty}$ and assume that $0<c\le \|x_k\|\le C<\infty$ for all $k.$

Choose  $(\delta_n)_{n=1}^{\infty}$ to be a decreasing sequence of positive numbers so that $\prod_{j=1}^{\infty}(1+\delta_j)<1+\delta.$  We will construct a subsequence $(y_n)_{n=1}^{\infty}$ and a sequence $(F_n)_{n=1}^\infty$ of closed finite-codimensional subspaces inductively.

Let $y_1=x_1$ and $F_1=X.$  If $y_1,\ldots,y_{n-1}$ and $F_1,\ldots,F_{n-1}$ have been chosen then we may choose a closed subspace $F_n$ of finite codimension so that if $w\in [y_j]_{j=1}^{n-1}$ and $z\in F_n$ then
$$ \|w-z\|\le (1+\tfrac14\delta_{n})\|w+z\|.$$
Let $Q_j:X\to X/F_j$ denote the quotient map for $1\le j\le n.$  If $y_{n-1}=x_{m_n}$ we may pick $y_n=x_{m_{n+1}}$ with $m_{n+1}>m_n$ so that $$\|Q_jy_n\|\le \frac{2^{j-n-1}c\delta_j}{10K}, \qquad 1\le j\le n.$$

Now suppose $w=\sum_{j=1}^{n-1}a_jy_j$ and $z=\sum_{j=n}^{N}a_jy_j$ where $\|w+z\|=1.$
Then we have
$$\|Q_nz\|= \|\sum_{j=n}^{N}a_jQ_ny_j\| \le 2Kc^{-1} \sum_{j=n}^{\infty} \|Q_ny_j\|\le \delta_n/5.$$

Hence there exists $z'\in F_n$ such that $\|z-z'\|\le \delta_n/4$ and thus
$$ \|w-z\|\le \|w-z'\|+\tfrac14\delta_n\le  (1+\tfrac14\delta_n)\|w+z'\|+\tfrac14\delta_n\le 1+\delta.$$
Thus we have the inequality
\begin{equation}\label{nonzerosum} \|\sum_{j=1}^{n-1}a_jy_j-\sum_{j=n}^Na_jy_j\|\le (1+\delta_n)\|\sum_{j=1}^Na_jy_j\|.\end{equation}

Then we claim that if $\epsilon_j=\pm 1$ with $\epsilon_j=1$ for $j<k$ we have
\begin{equation}\label{product0} \|\sum_{j=1}^n\epsilon_ja_jy_j\| \le \prod_{j=k}^\infty(1+\delta_{j})\|\sum_{j=1}^na_jy_j\|.\end{equation}
This is proved for fixed $n$ by backwards induction on $k.$  Indeed for $k=n$ it follows from \eqref{nonzerosum}.
If it is proved for $k+1$ we simply note that when $\epsilon_k=-1$ but $\epsilon_j=1$ for $j<k,$
$$ \|\sum_{j=1}^n\epsilon_ja_jy_j\|\le (1+\delta_k)\|\sum_{j=1}^ka_jy_j-\sum_{j=k+1}^n\epsilon_ja_jy_j\|\le \prod_{j=k}^{\infty}(1+\delta_j)\|\sum_{j=1}^na_jy_j\|.$$
\end{proof}

\vskip.5truein

\section{Embedding in a space with unconditional basis}\setcounter{equation}{0}

Let $Y$ be a space with an (FDD) $(Q_j)_{j=1}^{\infty}$ and let $X$ be a subspace of $Y.$  Then we will say that $X$ satisfies the {\it density condition} with respect to $(Q_j)_{j=1}^{\infty}$ if there is a dense subset $D$ of $X$ such that if $x\in D$ we have
$$ x=\sum_{j=1}^nQ_jx$$ for some $n=n(x)\in\mathbb N.$  The following Lemma is similar to Lemma 2.1 in \cite{Grivaux2003}.

\begin{Lem}\label{density}  Let $Y$ be a space with an (FDD) $(Q_j)_{j=1}^{\infty}$ and let $X$ be a subspace of $Y.$  Then given $\delta>0$ there exists an automorphism $T:Y\to Y$ so that $\|T-I\|<\delta$ and $X$ satisfies the density condition with respect to the FDD $(TQ_jT^{-1})_{j=1}^{\infty}.$
\end{Lem}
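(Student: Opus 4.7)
The plan is to construct $T$ as the limit of a sequence of small rank-one perturbations of the identity, each designed so that one element of a pre-chosen dense sequence $(\xi_k)_{k=1}^{\infty}$ in $X$ becomes the image under $T$ of a vector that is finitely supported in the original FDD, while matches from earlier steps are preserved. This follows the general strategy of Grivaux's Lemma 2.1, suitably adapted. Set $K = \sup_n \|P_n\|$ where $P_n := Q_1 + \cdots + Q_n$, fix positive $(\delta_k)$ with $\sum_k \delta_k < \delta/2$, and enumerate a dense sequence $(\xi_k)$ in $X$. I would construct $T_0 = I, T_1, T_2, \ldots$ inductively, maintaining the invariant that there exist finitely supported $v_1, \ldots, v_{k-1}$ with $v_j \in P_{n_j}(Y)$ and $T_{k-1} v_j = \xi_j$ for $j < k$.

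At step $k$, let $w_k = T_{k-1}^{-1}\xi_k$. If $w_k \in P_{n_{k-1}}(Y)$, set $v_k = w_k$, $n_k = n_{k-1}$, $T_k = T_{k-1}$ and move on. Otherwise $(I - P_{n_{k-1}})w_k \ne 0$, and I choose $n_k > n_{k-1}$ large enough that simultaneously (a) $\beta_k := \|(P_{n_k} - P_{n_{k-1}}) w_k\|$ is at least half of $\|(I - P_{n_{k-1}})w_k\|$, and (b) $\gamma_k := \|(I - P_{n_k})w_k\|$ is so small that $4K \|T_{k-1}\| \gamma_k / \beta_k < \delta_k$. Set $v_k = P_{n_k} w_k$, $u_k = \xi_k - T_{k-1} v_k = T_{k-1}(I - P_{n_k})w_k$, and via Hahn--Banach pick $\phi_k \in Y^*$ of the form $\phi_k = \psi_k \circ (P_{n_k} - P_{n_{k-1}})$ with $\phi_k(v_k) = 1$ and $\|\phi_k\| \le 2K/\beta_k$. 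Define $T_k = T_{k-1} + u_k \otimes \phi_k$. Since every $v_j$ for $j < k$ lies in $P_{n_{k-1}}(Y)$ and is annihilated by $\phi_k$, we get $T_k v_j = T_{k-1} v_j = \xi_j$, and by construction $T_k v_k = T_{k-1} v_k + u_k = \xi_k$.

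The norm estimate $\|T_k - T_{k-1}\| \le \|u_k\|\|\phi_k\| \le \|T_{k-1}\| \gamma_k \cdot 2K/\beta_k < \delta_k/2$ ensures that $(T_k)$ converges in operator norm to some $T$ with $\|T - I\| < \delta < 1$, hence $T$ is an automorphism of $Y$. For each fixed $k$, all later rank-one corrections $T_\ell - T_{\ell-1}$ with $\ell > k$ annihilate $v_k$ (since $\phi_\ell$ kills $P_{n_{\ell-1}}(Y) \supseteq P_{n_k}(Y) \ni v_k$), so in the limit $T v_k = \xi_k$. Consequently the dense subset $\{\xi_k : k \in \mathbb{N}\}$ of $X$ consists of vectors $\xi_k = T v_k$ where $v_k = T^{-1} \xi_k \in P_{n_k}(Y)$; equivalently $\xi_k = \sum_{j=1}^{n_k} (TQ_jT^{-1})\xi_k$, which is precisely the density condition with respect to the FDD $(TQ_jT^{-1})_{j=1}^{\infty}$.

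The main obstacle is controlling $\|\phi_k\|$ while keeping the correction $u_k$ small. Supporting $\phi_k$ on the new block $(P_{n_k} - P_{n_{k-1}})(Y)$ both enforces biorthogonality with earlier $v_j$'s for free and yields the Hahn--Banach bound $\|\phi_k\| \le 2K/\beta_k$ via factoring $\phi_k = \psi_k \circ (P_{n_k} - P_{n_{k-1}})$. The key quantitative point is that once $n_k$ is large enough to capture at least half of the tail mass $\|(I - P_{n_{k-1}})w_k\|$ inside the block (so $\beta_k$ is bounded below), further enlarging $n_k$ drives $\gamma_k$ to zero, making $\gamma_k / \beta_k$ arbitrarily small. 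The degenerate case when $w_k$ is already in $P_{n_{k-1}}(Y)$ is handled trivially without modification.
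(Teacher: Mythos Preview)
Your proof is correct and follows essentially the same rank-one perturbation approach as the paper (both in the spirit of Grivaux). The paper organizes the iteration multiplicatively, building each step $S_n$ relative to the already-transformed FDD and setting $T_n=S_nT_{n-1}$, whereas you organize it additively as $T_k=T_{k-1}+u_k\otimes\phi_k$ and work with the original FDD throughout; unwinding the paper's conjugations shows the two constructions produce the same rank-one updates.
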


\begin{proof}  We first prove the following claim:

{\it Claim:  Suppose $(Q'_j)_{j=1}^{\infty}$ is any (FDD) of $Y$ and $x\in Y.$  Then given $n\in\mathbb N$ and $\nu>0$ there exists an automorphism $S:Y\to Y$, with $\|S-I\|<\nu$, such that $SQ_j'S^{-1}(Y)=Q_j'(Y)$ for $1\le j\le n$ and for some $m\ge n$ we have
$x\in \sum_{j=1}^mSQ_j'S^{-1}(Y).$}

{\it Proof of the claim:}  Let $K$ be the FDD-constant of $(Q'_j)_{j=1}^{\infty}.$
If $\sum_{j=1}^nQ'_jx=x$ we take $m=n$ and $S=I.$  If not we may choose $m>n$ so that
$$ \|x-\sum_{j=1}^mQ'_jx\|< \nu (2K)^{-1} \|\sum_{j=n+1}^mQ'_jx\|.$$
Pick $y^*\in Y^*$ with $\|y^*\|=1$ and $y^*(\sum_{j=n+1}^mQ'_jx)=\|\sum_{j=n+1}^{m}Q'_jx\|.$  Then let
$$ Sy =y+\|\sum_{j=n+1}^{m}Q'_jx\|^{-1} y^*(\sum_{j=n+1}^mQ'_jy)(x-\sum_{j=1}^mQ'_jx).$$
Then $\|S-I\|<\nu.$  Also $SQ'_j=Q'_j$ if $j=1,2\ldots,n$ so that $SQ'_jS^{-1}(Y)=Q'_j(Y)$.  We also have $S\sum_{j=1}^mQ'_jx=x.$  Hence $S^{-1}x =\sum_{j=1}^mQ'_jx$ and so $x=\sum_{j=1}^mSQ'_jS^{-1}x.$  This concludes the proof of the claim.

We now turn to the Lemma.
Now suppose $\nu_n>0$ are such that $\prod_{j=1}^{\infty}(1+\nu_j)<1+\delta.$  Let $(x_n)_{n=1}^{\infty}$ be a dense sequence in $X.$ We inductively define automorphisms $S_n:Y\to Y$ with $\|S_n-I\|<\nu_n$ and a nondecreasing sequence of integers $(m_n)_{n=0}^{\infty}$ such that if $T_0=I$ and then $T_n=\prod_{j=1}^nS_j$ we have $$T_{n}Q_jT_{n}^{-1}(Y)=T_{n-1}Q_jT_{n-1}^{-1}(Y)$$ for $1\le j\le m_{n-1}$, and $$x_n=\sum_{j=1}^{m_{n}}T_nQ_jT_n^{-1}x_n
.$$
To do this pick $m_0=1$, say and then proceed inductively using the previous claim.  If $m_0,\ldots,m_{n-1}$ and $S_1,\ldots,S_{n-1}$ have been chosen, we pick $S_n$ by the claim so that $\|S_n-I\|<\nu_n,$ $S_nT_{n-1}Q_jT_{n-1}^{-1}S_n^{-1}(Y)=T_{n-1}Q_jT_{n-1}^{-1}(Y)$ for $1\le j\le m_{n-1}$  and for suitable $m_n\ge m_{n-1}$ we have  $$x_n=\sum_{j=1}^{m_n}T_nQ_jT_n^{-1}x_n.$$

The sequence $(T_n)$ converges in operator norm to an operator $T$ where $\|T-I\|\le \prod_{n=1}^{\infty}(1+\nu_n)-1<\delta.$  Clearly $$TQ_jT^{-1}=T_nQ_jT_n^{-1}, \qquad 1\le j\le m_{n}$$ so that
for each $n$,
$$ x_n=\sum_{j=1}^{m_{n}}TQ_jT^{-1}x_n.$$
\end{proof}

\begin{Prop}\label{shrinking} Let $X$ be a separable Banach space containing no copy of $\ell_1$ (respectively a separable reflexive Banach space) which is isometrically embedded in a Banach space $Y$ with a 1-UFDD $(Q_j)_{j=1}^{\infty}.$ Suppose $X$ satisfies the density condition with respect to $(Q_j)_{j=1}^{\infty}.$ Then $X$ can be isometrically embedded into a Banach space $Z$ (respectively a reflexive Banach space) with a shrinking 1-UFDD $(Q'_j)_{j=1}^{\infty}$ with $\rank Q'_j\le \rank Q_j$.

If further $X$ is $\lambda-$complemented in $Y$ then $X$ is $\lambda-$complemented in $Z$. \end{Prop}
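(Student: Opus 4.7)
The natural candidate is
\[
Z:=\overline{\operatorname{span}}\{Q_j(X):j\ge 1\}\subseteq Y,
\]
equipped with the inherited $Y$-norm.

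\smallskip
\noindent\textbf{Structural setup.}
The density condition places $D\subseteq Z$: each $x\in D$ equals $\sum_{j\le n(x)}Q_jx$, a finite sum of elements of the $Q_j(X)$.  Since $D$ is dense in $X$ and $Z$ is closed, $X\subseteq Z$ isometrically.  Each $Q_j(X)$ is finite-dimensional and hence closed in $Q_j(Y)$, so $Q_j(Z)=Q_j(X)$, and the restrictions $Q_j':=Q_j|_Z$ form a $1$-UFDD of $Z$ inheriting unconditionality from $Y$, with $\operatorname{rank} Q_j'\le\operatorname{rank} Q_j$.  If $P:Y\to X$ is a projection of norm $\le\lambda$, then $P|_Z:Z\to X\subseteq Z$ is a projection of norm $\le\lambda$, yielding the complementation statement.

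\smallskip
\noindent\textbf{Reduction via James' theorem.}
For a $1$-unconditional FDD, shrinking is equivalent to the ambient space containing no copy of $\ell_1$, boundedly complete is equivalent to containing no copy of $c_0$, and reflexivity is equivalent to both.  Thus the proposition reduces to showing:
\begin{enumerate}
\item[(a)] if $X$ contains no $\ell_1$, then $Z$ contains no $\ell_1$;
\item[(b)] if $X$ is reflexive, then $Z$ also contains no $c_0$.
\end{enumerate}

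\smallskip
\noindent\textbf{Lifting bad sequences to $X$.}
For (a), suppose $(z_n)\subset Z$ is equivalent to the $\ell_1$-basis.  Using the $1$-UFDD of $Z$, a standard gliding hump lets me pass to a small perturbation which is a normalized block sequence: $z_n=\sum_{j\in B_n}q_{n,j}$ with $q_{n,j}\in Q_j(X)$ and $(B_n)$ disjoint finite intervals.  The task is now to produce $w_n\in X$ whose FDD-components match $q_{n,j}$ for $j\in B_n$ up to summably small off-block errors, so that $1$-unconditionality of the FDD transfers the $\ell_1$-lower estimate for $(z_n)$ to $(w_n)$ in $X$.

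For each $j\in B_n$ I pick a lift $u_{n,j}\in X$ of $q_{n,j}$, and approximate $u_{n,j}$ within a controlled error by some $v_{n,j}\in D$; the key point is that $v_{n,j}$ has \emph{exactly finite} FDD-support, so its tail contributions vanish past a finite index rather than being merely small in norm.  Combining these via a diagonal procedure with sparsification of $(B_n)$, I arrange that the cumulative off-block errors are summable.  The resulting $(w_n)\subset X$ is $\ell_1$-equivalent, contradicting the hypothesis on $X$.  Case (b) proceeds analogously with a $c_0$-block in $Z$ being lifted to a $c_0$-sequence in reflexive $X$; combined with (a) this gives $Z$ reflexive.

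\smallskip
\noindent\textbf{Main obstacle.}
The central difficulty is the lifting step.  Elements of $Q_j(X)$ are typically not themselves in $X$, so atomic lifts inside $Q_j(Y)$ need not exist, and the open-mapping constants for $Q_j|_X:X\to Q_j(X)$ are not a priori uniform in $j$.  The density condition is the essential tool here: the \emph{exactly finite} FDD-support of elements of $D$ is what makes off-block contributions genuinely cancel past a fixed index (rather than merely being small in $Y$-norm), and a careful sparsification of $(B_n)$ then absorbs the residual errors into the $\ell_1$ (or $c_0$) perturbation budget.
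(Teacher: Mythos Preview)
Your construction takes $Z=\overline{\operatorname{span}}\{Q_j(X)\}$ with the \emph{inherited} $Y$-norm. This is the wrong space: in general $Z$ will contain $\ell_1$ even when $X$ is Hilbertian, so the reduction via James' theorem collapses. A concrete counterexample: let $Y=\ell_2\oplus_\infty\ell_1$ with the interleaved $1$-unconditional basis $(e_n)$ (odd indices in $\ell_2$, even in $\ell_1$), and set $f_n=e_{2n-1}+2^{-n}e_{2n}$, $X=\overline{\operatorname{span}}\,(f_n)$. One checks $\|\sum a_nf_n\|=(\sum|a_n|^2)^{1/2}$, so $X$ is isometric to $\ell_2$; the finite combinations of the $f_n$ are dense in $X$ and finitely supported, so the density condition holds; and $Q_j(X)=\operatorname{span}(e_j)$ for every $j$, so your $Z$ is all of $Y$, which contains $\ell_1$. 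Your lifting argument fails here exactly where you flagged the obstacle: the block $\ell_1$-sequence $(e_{2n})$ in $Z$ has lifts to $X$ of norm $\ge 2^n$, and sparsification cannot repair this because the blow-up is in the on-block lift itself, not in off-block contamination.

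The paper's proof does not use the inherited norm. After reducing (as you do) to $Q_j(Y)=Q_j(X)$, it \emph{renorms}: on $Y^*$ one sets
\[
|||y^*|||=\sup_n\sup_{\epsilon_j=\pm1}\Bigl\|\sum_{j=1}^n\epsilon_jJ^*Q_j^*y^*\Bigr\|_{X^*},
\]
and lets $\|z\|_Z=\sup\{|y^*(z)|:|||y^*|||\le1\}$. This norm agrees with $\|\cdot\|_Y$ on $X$ (so the embedding stays isometric) but is larger on the rest of $Z$; shrinkingness of the UFDD in this new norm is then read off from the fact that $X^*$ contains no copy of $c_0$. The renorming is the missing idea in your plan.
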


\begin{proof}  We assume, without loss of generality, that $Q_j(Y)=Q_j(X)$ for each $j.$ Let $J:X\to Y$ be an isometric embedding. Define on $Y^*$ the norm
$$ |||y^*|||= \sup_{n}\sup_{\epsilon_j=\pm 1}\|\sum_{j=1}^n\epsilon_jJ^*Q_j^*y^*\|.$$
Then $|||\cdot|||$ is weak$^*$-lower semicontinuous and we can define a Banach space $(\tilde Z,\|\cdot\|_Z)$ continuously embedded in $Y$ by
$$ \|z\|_{Z}= \sup\{|y^*(z)|:\ |||y^*|||\le 1\}.$$ By assumption $Q_j(Y)=Q_j(X)\subset \tilde Z$. If we let $Z$ be the closed linear span of $\cup_{j=1}^{\infty}Q_j(Y)$ in $\tilde Z$ then $Z^*$ can be identified with the completion of $(Y^*,|||\cdot|||).$

Clearly $(Q_j)_{j=1}^{\infty}$ is a 1-UFDD for $Z$.
We must check that $(Q_j)_{j=1}^{\infty}$ is shrinking for $Z$.  Indeed if not we can find a blocked sequence
$z_j^*\in \sum_{i=N_{j-1}+1}^{N_j} Q_i^*(Y^*)$ where $N_0=0<N_1<N_2<\cdots$ which is equivalent to the canonical basis $(e_j)_{j=1}^{\infty}$ of $c_0.$  Choose $\epsilon_i=\pm1$ so that
$$ \|J^* \sum_{i=N_{j-1}+1}^{N_j}\epsilon_i Q_i^*z_j^*\|=|||z_j^*|||.$$
If we let $$x^*_j=J^*\left( \sum_{i=N_{j-1}+1}^{N_j}\epsilon_i Q_i^*z_j^*\right)$$ then there is a bounded linear operator $T:c_0\to X^*$ with $Te_j=x^*_j.$
Since $X^*$ contains no copy of $c_0$ this implies that $\|x^*_j\|=|||z_j^*|||$ converges to zero, contrary to assumption.

Also if $x\in X$ then $\|x\|_Y=\|x\|_Z$ so that $X$ is isometrically embedded in $\tilde Z$.  Since a dense subset of $X$ lies in the linear span of $Q_j(Y)$ it follows that $X\subset Z.$  Further since $\|z\|_Z\ge \|z\|_Y$ in general, if there is a projection $P:Y\to X$ with $\|P\|=\lambda$ then $\|P\|_{Z\to X}\le \lambda.$

Finally if $X$ is reflexive we show that $Z$ is reflexive.  To do this it is necessary to show that the UFDD of $Z^*$ given by $(Q_j^*(Z^*))_{j=1}^{\infty}$ is also shrinking.  Suppose not.  Then we can find a blocked sequence
$z_j^*\in \sum_{i=N_{j-1}+1}^{N_j} Q_i^*(Y^*)$ where $N_0=0<N_1<N_2<\cdots$ which is equivalent to the canonical basis $(e_j)_{j=1}^{\infty}$ of $\ell_1.$   Let $\Delta$ denote the Cantor set $\{-1,+1\}^{\mathbb N}$ of all sequences $\epsilon=(\epsilon_i)_{i=1}^{\infty}$ and consider the compact Hausdorff space $\Omega=\Delta\times B_{X}$ where $B_X$ has the weak topology.  Let $f_j\in \C(\Omega)$ be defined by
$$ f_j(\epsilon,x) = \langle x, J^*(\sum_{i=N_{j-1}+1}^{N_j}\epsilon_iQ_i^*z_j^*)\rangle.$$
Then $(f_j)_{j=1}^{\infty}$ is equivalent to the $\ell_1-$basis and so there exists a probability measure $\mu$ on $\Omega$ and a Borel function $\varphi\in L_1(\mu)$ so that
$$ \int_\Omega f_j\varphi\,d\mu\ge 1, \qquad j=1,2,\ldots.$$
We argue that $\lim_{j\to\infty}f_j(\epsilon,x)=0$ for every $(\epsilon,x)\in \Omega$ and this contradicts the Dominated Convergence Theorem.  Indeed
\begin{align*} |f_j(\epsilon,x)|&=|\langle  \sum_{i=N_{j-1}+1}^{N_j}\epsilon_iQ_iJx,z_j^*\rangle|\\
&\le \|\sum_{i=N_{j-1}+1}^{N_j}\epsilon_iQ_iJx\|_Z\|z_j^*\|_{Z^*}\to 0\end{align*}
since $\sum_{i=1}^{\infty}\epsilon_iQ_iJx$ converges.

\end{proof}

The proof of the next Proposition is standard.

\begin{Prop}\label{embedding}  Let $X$ be a separable Banach space.  Then the following conditions on $X$ are equivalent:
\newline
(i) Given $\delta>0$ there exists a Banach space $Y$ with a 1-UFDD and a subspace $X_{\delta}$ of $Y$ with $d(X,X_{\delta})<1+\delta.$
\newline
(ii) Given $\delta>0$ there exists a Banach space $Y$ with a 1-unconditional basis and a subspace $X_{\delta}$ of $Y$ with $d(X,X_{\delta})<1+\delta.$
\newline
(iii) Given $\delta>0$ there exists a Banach space $Y$ containing $X$ (isometrically) and a sequence of finite-rank operators $A_n:X\to Y$ such that
$$ \|\sum_{j=1}^n\epsilon_jA_j\|<1+\delta, \qquad \epsilon_j=\pm 1,\ n=1,2,\ldots$$ and
$$ x=\sum_{j=1}^{\infty}A_jx, \qquad x\in X.$$
\end{Prop}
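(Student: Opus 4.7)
My plan is to prove the three conditions equivalent via the cycle (ii) $\Rightarrow$ (i) $\Rightarrow$ (iii) $\Rightarrow$ (i) $\Rightarrow$ (ii). I would note that (ii) $\Rightarrow$ (i) is immediate, since a 1-unconditional basis is a 1-UFDD with rank-one blocks.

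For (i) $\Rightarrow$ (iii), I would take $J : X \to Y$ to be a $(1+\delta')$-isomorphism onto $X_{\delta'} \subset Y$ (after rescaling so that $\|x\| \le \|Jx\| \le (1+\delta')\|x\|$), where $Y$ carries a 1-UFDD $(Q_j)$ and $\delta'<\delta$ is suitably small. Form the closed quotient $\tilde Y := (X \oplus_1 Y)/V$ with $V := \{(x, -Jx) : x \in X\}$. A short verification using $\|x - x'\| + \|Jx'\| \ge \|x - x'\| + \|x'\| \ge \|x\|$ (lower bound) and the choice $x'=0$ (upper bound) shows that $\iota(x) := (x, 0) + V$ is an isometric embedding of $X$ into $\tilde Y$. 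Setting $A_j x := (0, Q_j J x) + V$, the identity $(x,0) - (0, Jx) \in V$ gives $\sum_j A_j x = \iota(x)$, and the 1-unconditionality of the UFDD yields
\[
\left\|\sum_{j=1}^n \epsilon_j A_j x\right\|_{\tilde Y} \le \left\|\sum_{j=1}^n \epsilon_j Q_j J x\right\|_Y \le \|J x\|_Y \le (1+\delta')\|x\|.
\]

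For (iii) $\Rightarrow$ (i), I would set $B_j := A_j(X)$ and $W := \bigoplus_j B_j$ with norm $\|(y_j)\|_W := \sup_{N, \epsilon_j = \pm 1}\|\sum_{j=1}^N \epsilon_j y_j\|_Y$; then $(B_j)$ is a 1-UFDD of $W$, and the map $T x := (A_j x)_j$ satisfies $\|x\|_X = \lim_N \|\sum_{j=1}^N A_j x\|_Y \le \|T x\|_W \le (1+\delta_0)\|x\|_X$, where $\delta_0$ is the constant in (iii).

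The hard step will be (i) $\Rightarrow$ (ii): refining a 1-UFDD to a 1-unconditional basis almost isometrically. I would choose $(\eta_j)$ with $\prod_j(1+\eta_j) < 1 + \delta''$ for suitable $\delta''$. Since each $B_j = Q_j(Y)$ is finite-dimensional, I can find $(1+\eta_j)$-isomorphic embeddings $G_j : B_j \hookrightarrow \ell_\infty^{N_j}$. Pulling back the $\ell_1^{N_j}$ dual basis gives $\phi_{j,k} := G_j^*(e_{j,k}^*) \in B_j^*$, which I extend to $W^*$ as $\tilde\phi_{j,k} := \phi_{j,k} \circ Q_j$. I would then define $Z$ via its dual: let $Z^*$ be the completion of the span of formal basis vectors $(f_{j,k}^*)$ under
\[
\left\|\sum d_{j,k} f_{j,k}^*\right\|_{Z^*} := \sup_{\epsilon_{j,k} = \pm 1}\left\|\sum_{j,k}\epsilon_{j,k} d_{j,k}\tilde\phi_{j,k}\right\|_{W^*},
\]
so that $(f_{j,k}^*)$, and hence the dual basis $(f_{j,k})$ of $Z$, is 1-unconditional by construction. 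The map $Sw := \sum \tilde\phi_{j,k}(w) f_{j,k}$ embeds $W$ into $Z$ with $\|Sw\|_Z \le \|w\|_W$ immediate from $\|\cdot\|_{Z^*} \ge \|\cdot\|_{W^*}$. The reverse inequality, which is the main delicate point, requires showing that for any finite-support $w^* = \sum_{j \le n} Q_j^* \phi_j \in W^*$ of norm $1$ one can select Hahn--Banach preimages $f_j^* \in \ell_1^{N_j}$ of $\phi_j$ (with $\|f_j^*\|_{\ell_1} \le \|\phi_j\|_{B_j^*}$) such that the sup-over-signs in the $Z^*$-norm blows up by at most $\prod(1+\eta_j)$; this uses the 1-unconditionality of the basis of $\ell_1^{N_j}$ within each block combined with the 1-unconditionality of the dual UFDD across blocks. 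Composing the embeddings $X \hookrightarrow W \hookrightarrow Z$ then produces the required $(1+\delta)$-embedding of $X$ into the 1-unconditional-basis space $Z$.
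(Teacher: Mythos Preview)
Your pushout construction for (i) $\Rightarrow$ (iii) is correct and in fact more explicit than the paper, which routes through (ii) and calls (ii) $\Rightarrow$ (iii) ``clear'' without addressing the isometric-containment requirement. The real gap is in your (iii) $\Rightarrow$ (i). For $Tx=(A_jx)_j$ to lie in the 1-UFDD space $W$ --- that is, in the closure of finitely supported sequences under $\|\cdot\|_W$ --- you need $\sum_j A_jx$ to converge \emph{unconditionally} in $Y$. Hypothesis (iii) gives only that the signed partial sums are uniformly bounded (so $\sum A_jx$ is weakly unconditionally Cauchy) together with norm convergence of the unsigned series; this combination does not force unconditional convergence (cf.\ $y_j=e_j-e_{j+1}$ in $c_0$, which is WUC and sums to $e_1$ but not unconditionally). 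If you interpret $W$ instead as all sequences of finite $\|\cdot\|_W$-norm, then $(B_j)$ is no longer a UFDD of $W$. The paper handles this by first blocking: replace $(A_j)$ by $A'_k=\sum_{N_{k-1}<j\le N_k}A_j$ with $(N_k)$ chosen so that $\sum_k\|A'_kx\|<\infty$ for $x$ in a countable dense set (possible since the unsigned tails tend to $0$). Then $Tx\in W$ for those $x$, and since $W$ is closed inside the larger space of WUC sequences and $T$ is continuous into that larger space, $T(X)\subset W$.

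For (i) $\Rightarrow$ (ii) the paper simply cites Lindenstrauss--Tzafriri, Theorem~1.g.5; you attempt a direct dual construction. The step you yourself flag as ``the main delicate point'' is genuinely delicate and not established by what you wrote: having chosen Hahn--Banach lifts $d_j\in\ell_1^{N_j}$ of the $\phi_j$, you must bound $\sup_\epsilon\bigl\|\sum_jQ_j^*G_j^*(\epsilon\cdot d_j)\bigr\|_{W^*}$, which (using 1-unconditionality of the dual UFDD) unwinds to $\sup_{\|w\|\le1}\sum_{j,k}|(d_j)_k(G_jQ_jw)_k|$, and compare it with $\sup_{\|w\|\le1}\sum_j|\langle d_j,G_jQ_jw\rangle|=1$. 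Your suggested mechanism --- $\|G_j^*(\epsilon\cdot d_j)\|_{B_j^*}\le(1+\eta_j)\|\phi_j\|_{B_j^*}$ combined with 1-unconditionality across blocks --- does not suffice, because for a general 1-UFDD the dual norm $\|\sum Q_j^*\psi_j\|$ is not controlled by the block norms $\|\psi_j\|$ alone. As written this implication is a sketch rather than a proof; either a more careful choice-of-lift argument or the citation is needed.
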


\begin{proof} (i) $\implies$ (ii): it is essentially contained in \cite{LindenstraussTzafriri1977} Theorem 1.g.5 (p. 51) that every Banach space with a 1-UFDD is $(1+\delta)$-isomorphic to a subspace of a space with a 1-unconditional basis; in \cite{LindenstraussTzafriri1977} the constants are not tracked, but clearly the same argument would prove this more precise statement.

(ii) $\implies$ (iii): this is clear.

(iii) $\implies$ (i): we first note that by blocking the series $\sum A_j$ suitably (i.e. replacing $(A_j)_{j=1}^{\infty}$ by $A'_j=\sum_{N_{j-1}+1}^{N_j}A_i$ for suitable $N_0=0<N_1<\cdots,$ we can assume that for a dense set of $x\in X$ we have $\sum_{j=1}^{\infty}\|A_jx\|<\infty$.  We define $Z$ to be the space of all sequences $(y_j)_{j=1}^{\infty}$ with $y_j\in A_j(X)$ such that $\sum_{j=1}^{\infty}y_j$ converges unconditionally in $Y$, under the norm $$\|(y_j)_{j=1}^{\infty}\|=\sup_{n}\sup_{\epsilon_j=\pm1} \|\sum_{j=1}^n\epsilon_jy_j\|.$$  This space has a 1-UFDD. $X$ can be $(1+\delta)$-embedded into $Z$ via the map $x\to (A_jx)_{j=1}^{\infty}$ (it suffices to note that $Z$ is closed in the larger space of weakly unconditionally Cauchy series with the same norm, and a dense subset of $X$ is mapped into $Z$ by our assumptions).
\end{proof}

Combining  Proposition \ref{shrinking}, Proposition \ref{embedding} and Lemma \ref{density} gives the following result. Part (ii) is contained in Corollary IV.4 of \cite{GodefroyKalton1997} (where the proof is inaccurate); as remarked in \cite{LindenstraussTzafriri1977} p.51 one cannot hope for (ii) to hold with $Y$ having an unconditional basis.

\begin{Prop}\label{cleanup}  Suppose $X$ is a separable Banach space containing no complemented copy of $\ell_1.$  Suppose, given $\delta>0$ there exists a Banach space $Y$ containing $X$ (isometrically) and a sequence of finite-rank operators $A_n:X\to Y$ such that
$$ \|\sum_{j=1}^n\epsilon_jA_j\|<1+\delta, \qquad \epsilon_j=\pm 1,\ n=1,2,\ldots$$ and
$$ x=\sum_{j=1}^{\infty}A_jx, \qquad x\in X.$$ Then
\newline
(i) For any $\delta>0,$ there is a Banach space $Z$ with a shrinking 1-unconditional basis and a subspace $X_{\delta}$ of $Z$ such that $d(X,X_{\delta})<1+\delta.$ \newline
(ii) If $X$ is reflexive then we may take $Z$ reflexive in (i).\newline
(iii) If for every $\delta>0$ we can take $Y=X$ (i.e. $X$ has (UMAP)) then for any $\delta>0,$ there is a Banach space $Z$ with a shrinking 1-UFDD and a $(1+\delta)-$complemented subspace $X_{\delta}$ of $Z$ such that $d(X,X_{\delta})<1+\delta.$
\newline
(iv) If $X$ is reflexive then we may take $Z$ reflexive in (iii).
\end{Prop}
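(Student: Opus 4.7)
The strategy is to chain Proposition \ref{embedding}, Lemma \ref{density}, and Proposition \ref{shrinking}, and then apply the standard conversion of a 1-UFDD into a 1-unconditional basis; at each stage the constants can be absorbed into a single tolerance $1+\delta$.

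For (i) and (ii), fix $\delta>0$ and pick $\delta_1,\delta_2,\delta_3>0$ with $(1+\delta_1)(1+\delta_2)(1+\delta_3)<1+\delta$. By the hypothesis and the implication (iii)$\Rightarrow$(i) of Proposition \ref{embedding}, $X$ is $(1+\delta_1)$-isomorphic to a subspace of a Banach space $W$ with 1-UFDD $(Q_j)_{j=1}^{\infty}$; take the embedding isometric by working with the inherited norm on the image. Apply Lemma \ref{density} to obtain an automorphism $T$ of $W$ with $\|T-I\|<\delta_2$ such that $X$ satisfies the density condition with respect to $(TQ_jT^{-1})$, and then renorm $W$ by $\|w\|'=\sup_{n,\epsilon}\|\sum_{j=1}^{n}\epsilon_j TQ_jT^{-1}w\|_W$ so that this new FDD becomes 1-unconditional; this costs at most a further factor $1+\delta_2$ on $X$. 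The hypotheses of Proposition \ref{shrinking} are now satisfied---its proof in fact uses only ``$X^*$ contains no $c_0$,'' which by Bessaga--Pelczynski is equivalent to the ``no complemented copy of $\ell_1$'' assumption we have---so one obtains an isometric embedding of $X$ into a space $Z_0$ with shrinking 1-UFDD, reflexive when $X$ is. Finally, the construction from \cite{LindenstraussTzafriri1977} Theorem 1.g.5 $(1+\delta_3)$-embeds $Z_0$ into a space $Z$ with 1-unconditional basis; since that construction just replaces each UFDD block by an Auerbach basis and uses the natural unconditional sum, it preserves shrinkingness and, when $Z_0$ is reflexive, reflexivity, which yields (i) and (ii).

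For (iii) and (iv), notice that in the UMAP case $Y=X$ the 1-UFDD space $W$ built in the proof of Proposition \ref{embedding} carries a canonical $(1+\delta)$-bounded projection onto the image of $X$: the composition of $(y_j)\mapsto\sum_k y_k$ (which has norm at most $1$ as a map $W\to X$) with the embedding $x\mapsto(A_jx)$. The subsequent renorming and the Lemma \ref{density} step only adjust the ambient norm or apply a close-to-identity automorphism of $W$, so this complementation survives up to small factors; Proposition \ref{shrinking} then carries its ``$\lambda$-complemented'' clause through to the shrinking 1-UFDD space $Z_0$. Keeping $Z_0$ itself as $Z$---rather than invoking the 1-unconditional basis trick, which as noted in \cite{LindenstraussTzafriri1977} p.~51 can destroy complementation---gives (iii), and the reflexive clause of Proposition \ref{shrinking} gives (iv).

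The genuine content has already been packaged into the three cited results; what remains is essentially constant-tracking, plus the observation that Proposition \ref{shrinking}, though stated under ``no copy of $\ell_1$,'' really requires only ``no complemented copy of $\ell_1$,'' which is exactly the hypothesis of the present proposition. I do not foresee a serious obstacle beyond this bookkeeping.
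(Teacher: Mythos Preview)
Your argument is correct and is exactly the combination the paper intends—its entire proof is the single sentence ``Combining Proposition \ref{shrinking}, Proposition \ref{embedding} and Lemma \ref{density} gives the following result''—and your observation that the proof of Proposition \ref{shrinking} really only uses ``$X^*$ contains no copy of $c_0$'' (equivalently, no complemented $\ell_1$ in $X$) is precisely what reconciles the hypotheses.

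One streamlining worth noting: rather than going UFDD $\to$ shrinking UFDD $\to$ basis and then having to argue that the Lindenstrauss--Tzafriri 1.g.5 construction preserves shrinkingness and reflexivity (which you assert but do not verify), you can run Proposition \ref{embedding} all the way to (ii) first, so that $W$ already has a $1$-unconditional \emph{basis}, and only then apply Lemma \ref{density} and Proposition \ref{shrinking}. The automorphism $T$ of Lemma \ref{density} keeps each $TQ_jT^{-1}$ rank one, and the clause $\rank Q'_j\le\rank Q_j$ in Proposition \ref{shrinking} then forces the output $1$-UFDD to have rank-one pieces as well, i.e.\ to be a shrinking (respectively reflexive) $1$-unconditional basis outright. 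This is presumably why that rank bound is recorded in the statement of Proposition \ref{shrinking}, and it removes the one soft spot in your chain.
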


\vskip.5truein

\section{The main result}\setcounter{equation}{0}

\begin{Lem}\label{products}  Let $Y$ be a Banach space and suppose $X$ is a closed subspace of $Y$.  Denote by $Q$ the quotient map $Q:Y\to Y/X.$  Suppose $(B_n)_{n=1}^{\infty}$ is a uniformly bounded sequence of operators on $Y,$ such that
\newline
\begin{equation}\label{10} \lim_{n\to\infty}\|QB_n\|_{X\to Y/X}=0\end{equation} and
\begin{equation}\label{11} \limsup_{n\to\infty}\|B_n\|_{X\to Y}\le 1.\end{equation}
Then given $\delta>0$ there is an infinite subset $\mathbb M$ of $\mathbb N$ such that if $n_1<n_2<\cdots<n_k$ with $n_j\in \mathbb M$ for $1\le j\le k$ then
$$ \|B_{n_1}B_{n_2}\ldots B_{n_k}\|_{X\to Y}< 1+\delta.$$
\end{Lem}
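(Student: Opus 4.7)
Set $M:=\sup_n\|B_n\|_{Y\to Y}$; we may assume $M\ge 1$, since otherwise the conclusion is immediate from $\|B_{n_1}\cdots B_{n_k}\|_{X\to Y}\le M^k$. The idea is to select $\mathbb M=\{m_1<m_2<\cdots\}$ so aggressively that the two ``defects'' $\|QB_{m_l}\|_{X\to Y/X}$ and $\|B_{m_l}\|_{X\to Y}-1$ decay along $\mathbb M$ much faster than $M^{-l}$. Concretely, fix a rapidly decreasing sequence $\rho_l:=c(M+2)^{-2l}$ with $c>0$ small (to be chosen in terms of $\delta$ and $M$); using \eqref{10} and \eqref{11}, inductively pick $m_l>m_{l-1}$ with $\|QB_{m_l}\|_{X\to Y/X}\le\rho_l$ and $\|B_{m_l}\|_{X\to Y}\le 1+\rho_l$.

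\textbf{Recurrence along a product.} Fix $n_1<\cdots<n_k$ in $\mathbb M$, write $n_j=m_{i_j}$; since $i_1<\cdots<i_k$ we have $i_j\ge j$, so $\rho_{i_j}\le\rho_j$. For $x\in B_X$, set $w_k:=x$ and $w_{j-1}:=B_{n_j}w_j$, so that $w_0=B_{n_1}\cdots B_{n_k}x$. Inductively approximate each $w_j$ by an element of $X$: take $x_k=x$, $e_k=0$, and for $j<k$ choose $x_j\in X$ with $e_j:=w_j-x_j$ satisfying $\|e_j\|_Y\le\|Qw_j\|_{Y/X}+\rho_j$, which is possible by the definition of the quotient norm. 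Writing $a_j:=\|x_j\|$ and $b_j:=\|e_j\|$, the bounds $\|B_{n_j}\|_{X\to Y}\le 1+\rho_j$, $\|QB_{n_j}\|_{X\to Y/X}\le\rho_j$, and $\|B_{n_j}\|_{Y\to Y}\le M$ applied to $w_j=x_j+e_j$ yield
\[
\|w_{j-1}\|_Y\le(1+\rho_j)a_j+Mb_j,\qquad \|Qw_{j-1}\|_{Y/X}\le\rho_j a_j+Mb_j,
\]
and therefore the coupled recurrence
\[
a_{j-1}\le(1+2\rho_j)a_j+2Mb_j+\rho_{j-1},\qquad b_{j-1}\le\rho_j a_j+Mb_j+\rho_{j-1}.
\]

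\textbf{Main obstacle and conclusion.} The principal difficulty is the factor $M$ multiplying $b_j$: naively, the error $b_j$ blows up by $M$ at every step. The way around this is that iterating the $b$-recurrence from $b_k=0$ gives $b_j\le\sum_{l=j+1}^{k}M^{l-j-1}(\rho_l a_l+\rho_{l-1})$, and the choice $\rho_l=c(M+2)^{-2l}$ makes $M^l\rho_l$ summable with geometric ratio $M/(M+2)^2<1$. Under the bootstrap hypothesis $a_l\le 2$ for all $l>j$, this forces $b_j\le K_0 c(M+2)^{-2j}$ uniformly in $k$, for some $K_0=K_0(M)$; in particular $\sum_j b_j\le K_0'c$. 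Feeding this back into the $a$-recurrence and telescoping gives $a_1\le 1+K_1 c$ for some $K_1=K_1(M)$, which closes the bootstrap and, for $c$ chosen small enough, yields $a_1\le 1+\delta/2$. Finally
\[
\|w_0\|_Y\le(1+\rho_{i_1})a_1+Mb_1\le(1+\rho_1)(1+\delta/2)+MK_0c(M+2)^{-2}<1+\delta
\]
for all sufficiently small $c$. Since this holds for every $x\in B_X$ and every admissible $n_1<\cdots<n_k$ in $\mathbb M$, the proof is complete.
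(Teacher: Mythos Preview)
Your argument is correct and follows the same idea as the paper: select $\mathbb M$ so that the two defects decay faster than any power of $M$, then control the product by correcting $B_{n_j}\cdots B_{n_k}x$ back into $X$ at each step with a small error. The paper's execution is cleaner because it works at the operator level rather than vector by vector: with $\nu_n=(3M+6)^{-n+1}\delta$ it proves directly by induction on $k$ the pair of inequalities $\|QB_{n_1}\cdots B_{n_k}\|_{X\to Y/X}<\nu_{n_1}$ and $\|B_{n_1}\cdots B_{n_k}\|_{X\to Y}<1+\nu_{n_1}$, which absorbs your coupled recurrence and bootstrap into a single inductive step.
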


\begin{proof} We suppose $\|B_n\|\le M$ for all $n.$ We assume $\delta<1/2.$ It suffices to prove this for $\mathbb M=\mathbb N$ when $\|QB_n\|<\nu_n/3$ and
$\|B_n\|_{X\to Y}\le 1+\nu_n/3$ where $(\nu_n)_{n=1}^{\infty}$ is the decreasing positive sequence given by $\nu_{n}=(3M+6)^{-n+1}\delta.$  We will prove by induction on $k$ that
\begin{equation}\label{12} \|QB_{n_1}\ldots B_{n_k}\|_{X\to Y/X}<\nu_{n_1}\end{equation} and
\begin{equation}\label{13} \|B_{n_1}\ldots B_{n_k}\|_{X\to Y}<1+\nu_{n_1}.\end{equation}

Under these hypotheses the conclusion is obviously true for $k=1.$  We next assume it is true for $k$ and prove it for products of length $k+1.$  Consider $m<m_1<\cdots<m_k.$  Then if $S=B_{m_1}\ldots B_{m_k}$ we have
$$ \|QS\|_{X\to Y/X}<\nu_{m+1},\qquad \|S\|_{X\to Y}<1+\nu_{m+1}.$$
Now if $x\in X$ with $\|x\|\le 1$ there exists $x'\in X$ so that
$$ \|x'-Sx\|<\nu_{m+1}$$ and then
$$ \|x'\|<1+2\nu_{m+1}.$$
Now
$$ B_m Sx= B_m x'+ B_m(Sx-x')$$ and so we have
\begin{align*} \|QB_mSx\| &< \tfrac13\nu_m(1+2\nu_{m+1})+ M\nu_{m+1}\\ &<\tfrac23\nu_m+M\nu_{m+1}<\nu_m\end{align*} and
\begin{align*} \|B_mSx\| &\le (1+\tfrac13\nu_m)(1+2\nu_{m+1})+ M\nu_{m+1}\\ &<1+\tfrac23\nu_m+(M+2)\nu_{m+1}=1+\nu_m\end{align*} establishing both inductive hypotheses \eqref{12} and \eqref{13}.
\end{proof}

\begin{Thm} \label{main} Let $X$ be a separable  Banach space.  Then the following conditions are equivalent:
\newline (i) $X$ has (au*).\newline
(ii) For any $\delta>0$ there is a Banach space $Y$ with a shrinking 1-unconditional basis and a subspace $X_{\delta}$ of $Y$ such that $d(X,X_{\delta})<1+\delta.$\end{Thm}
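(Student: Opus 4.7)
The direction (ii)$\implies$(i) is a straightforward consequence of Proposition \ref{subspaces}. Spaces with shrinking 1-unconditional bases have (au*) by part (i), and (au*) passes to subspaces by part (ii), so each $X_\delta$ has (au*). Transferring the (au*) condition across the $(1+\delta)$-isomorphism $X\simeq X_\delta$ gives (au*) for $X$ with a constant no worse than $(1+\delta)^3$; since $\delta$ is arbitrary, $X$ itself has (au*).

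For the main direction (i)$\implies$(ii), the plan is to verify the hypotheses of Proposition \ref{cleanup}(i). By Proposition \ref{separabledual}, $X^*$ is separable, so $X$ contains no copy of $\ell_1$. It then suffices, given $\delta>0$, to produce an isometric embedding $X\hookrightarrow Y$ and finite-rank operators $A_n:X\to Y$ with $\sum_n A_n x=x$ for all $x\in X$ and $\|\sum_{j=1}^n\epsilon_j A_j\|<1+\delta$ uniformly in $n$ and in signs $\epsilon_j=\pm 1$. I would build the $A_n$ inductively by repeated application of Lemma \ref{principle}(ii): fix a dense sequence $(x_m^*)\subset X^*$ and, at step $n$, choose a finite-dimensional $E_n\subset X^*$ enlarging $\mathrm{span}(x_1^*,\ldots,x_n^*)$, together with a weak$^*$-closed finite-codim subspace $F_n$ such that the pair $(E_1+\cdots+E_n,F_n)$ is $(1+\delta_n)$-unconditional with $\prod(1+\delta_j)<1+\delta$. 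A dual realization in a suitable ambient $Y\supset X$ turns each $E_n$ into a finite-dimensional block carrying a natural finite-rank operator $A_n$ playing the role of the $n$-th coordinate projection.

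To promote the telescoping $(1+\delta_n)$-unconditionality to a uniform $(1+\delta)$-bound on all alternating sums I would invoke Lemma \ref{products}. The sign-flip operators $B_n=I-2A_n$ on $Y$ satisfy $\|QB_n|_X\|_{X\to Y/X}\to 0$ (because $A_n x$ lies arbitrarily close to $X$) and $\limsup\|B_n|_X\|_{X\to Y}\le 1$ (from the single-step (au*) estimate), so Lemma \ref{products} supplies an infinite $\mathbb{M}\subset\mathbb{N}$ along which $\|B_{n_1}\cdots B_{n_k}\|_{X\to Y}<1+\delta$. Since a product of approximately orthogonal sign-flips telescopes (up to negligible error) into the single operator $I-2\sum_{j\in S}A_j$, this translates into $\|\sum_j\epsilon_j A_j\|<1+\delta$ for every sign pattern supported on $\mathbb{M}$. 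Thinning the construction to indices in $\mathbb{M}$ therefore provides exactly the data required by Proposition \ref{embedding}(iii), and Proposition \ref{cleanup}(i) then delivers the shrinking 1-unconditional basis of (ii).

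The principal obstacle is realising the dual decomposition of $X^*$ produced by Lemma \ref{principle}(ii) as honest finite-rank operators in an ambient space $Y$: because $X$ may fail the approximation property, we cannot expect $A_n$ to live on $X$ itself, and the choice of $Y$ must simultaneously accommodate the (au*)-inspired unconditional decomposition and the vanishing defect $\|QB_n|_X\|_{X\to Y/X}\to 0$ that powers Lemma \ref{products}. The approximate orthogonality of the $A_n$ (only exact orthogonality makes products of sign-flips collapse to a single sign-flip) is a further delicate point whose error must be absorbed into the $\delta_n$ at the cost of some extra care in the inductive step.
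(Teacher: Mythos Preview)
Your direction (ii)$\implies$(i) is fine and matches the paper. For (i)$\implies$(ii) you correctly identify the target (Proposition \ref{cleanup}) and the engine (Lemma \ref{products}), but the plan has a genuine gap precisely at the point you flag as ``the principal obstacle''---and the paper's solution is quite different from what you sketch.

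The paper does \emph{not} build the $A_n$ from the dual decomposition of Lemma \ref{principle}(ii). Instead it invokes Zippin's theorem to embed $X$ isometrically into a space $Y$ with a \emph{shrinking basis}, and takes the operators $T_j$ to be convex combinations of the partial-sum projections $S_k$ of that basis. This choice is decisive for two reasons. First, it gives a concrete ambient $Y$ with ready-made finite-rank operators; your proposal leaves both $Y$ and the $A_n$ undefined. Second, and crucially, convex combinations of partial sums inherit the exact commutation relation $T_jT_k=T_k$ for $j>k$, so the product $(I-2T_{n_1})\cdots(I-2T_{n_k})$ telescopes \emph{exactly} to $\pm\sum\epsilon_jA_j$ via an algebraic identity (the Casazza--Kalton calculation). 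Your ``approximate orthogonality with error absorbed into $\delta_n$'' is not a substitute: a product of $n$ sign-flips that are only approximately projections accumulates error multiplicatively, and there is no mechanism in your scheme to prevent blow-up.

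The real use of (au$^*$) in the paper is concentrated in an intermediate lemma (Lemma \ref{inter}) showing that for each $\nu>0$ one can find such a convex combination $T$ with both $\|QT\|_{X\to Y/X}<\nu$ and $\|I-2T\|_{X\to Y}<1+\nu$. The argument is dual: (au$^*$) gives a pointwise inequality $\|J^*(I-2S_n)^*y^*\|\le(1+\tfrac12\nu)\|y^*\|+2\|R_n^*y^*\|$ for a suitable remainder $R_n$, and then one passes to convex combinations using that $(QS_nJ)$ and a lifting of $(R_n)$ are \emph{weakly null} sequences of compact operators (Kalton 1974) so Mazur's theorem yields norm-small averages. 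This step---converting the asymptotic (au$^*$) inequality into a uniform operator bound on a single convex combination---has no analogue in your outline, and without it the hypotheses of Lemma \ref{products} are not verified.
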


\begin{proof}  That (ii) $\implies$ (i) follows from Proposition \ref{subspaces}.  We turn to the proof of (i) $\implies$ (ii).

By Proposition \ref{separabledual} $X^*$ is separable. We start by using the result of Zippin \cite{Zippin1988} that $X$ can be embedded in a space $Y$ with a shrinking basis (we can assume the embedding is isometric).
Let $S_n$ denote the partial sum operators with respect to this basis, and let $Q:Y\to Y/X$ be the quotient map.  We also denote by $J$ the inclusion $J:X\to Y.$

We will prove the following Lemma:

\begin{Lem}\label{inter} Given $\nu>0$ and $n\in\mathbb N$ there exists $T$ in the convex hull of $\{S_{k}:\ k>n\}$ such that
$\|QT\|_{X\to Y/X}<\nu$ and $\|I-2T\|_{X\to Y}<1+\nu.$\end{Lem}

\begin{proof}[Proof of the Lemma]  First we will argue that for every $n\in\mathbb N$ there exists $m>n$ such that
\begin{equation}\label{21} \|J^*(S_n^*y^*+S_m^*y^*-y^*)\| \le \|J^*(S_n^*y^*-S_m^*y^*+y^*)\|+\tfrac12\nu\|y^*\|, \quad y^*\in Y^*.\end{equation}

If \eqref{21} fails we may find a sequence $(y_m^*)_{m>n}$ such that $\|y_m^*\|=1$ and
$$ \|J^*(S_n^*y_m^*+S_m^*y_m^*-y_m^*)\|>\|J^*(S_n^*y_m^*-S_m^*y_m^*+y_m^*)\|+\tfrac12\nu, \qquad m>n.$$  We may pass to a subsequence $\mathbb M$ of $\{n+1,n+2,\ldots\}$ so that $\lim_{m\in\mathbb M}y_m^*=y^*$ weak$^*$ for some $y^*\in Y^*$.  Since $S_n$ is finite rank $\lim_{m\in\mathbb M}\|S_n^*(y^*-y_m^*)\|=0.$
Hence
$$ \liminf_{m\in\mathbb M}(\|J^*S_n^*y^*+J^*(S_m^*y_m^*-y_m^*)\|-\|J^*S_n^*y^*-J^*(S_m^*y_m^*-y_m^*)\|)\ge \tfrac12\nu.$$
Now $(S_m^*y_m^*-y_m^*)_{m=1}^{\infty}$ is weak$^*$-null in $Y^*$ since
for $y\in Y,$
$$ |\langle y, S_m^*y_m^*-y_m^*\rangle|=|\langle S_my-y,y_m^*\rangle|\le \|S_my-y\|.$$
Hence the sequence $(J^*(S_m^*y_m^*-y_m^*))_{m=1}^{\infty}$ is weak$^*$-null  in $X^*$.  Thus we have a contradiction to $(au^*)$ for $X$.
This shows that \eqref{21} holds for some $m=m(n)>n.$

Let us put $R_n=S_{m(n)}-S_n.$  Thus we have
\begin{align*} \|J^*(I-2S_n)^*y^*\| &\le \|J^*(S_n^*y^*+S_m^*y^*-y^*)\|+\|R_n^*y^*\|\\
&\le \|J^* (y^*-S_m^*y^*+S_n^*y^*)\|+\|R_n^*y^*\|+\tfrac12\nu\|y^*\|\\
&\le \|J^*y^*\| + 2\|R_n^*y^*\|+\tfrac12\nu\|y^*\|.\end{align*}
Thus we have
\begin{equation}\label{22}\|J^*(I-2S_n)^*y^*\|\le (1+\tfrac12\nu)\|y^*\| + 2\|R_n^*y^*\|, \qquad y^*\in Y^*.\end{equation}

We next consider two sequences of finite-rank operators.  First we consider the sequence $(QS_nJ)_{n=1}^{\infty}$ in $\mathcal K(X,Y/X)$. Note that if $z^*\in (Y/X)^*$ then $J^*Q^*z^*=0.$  Since $\lim_{n\to\infty}\|S_n^*Q^*z^*-Q^*z^*\|=0$ (as the basis is shrinking) we conclude that $\lim_{n\to\infty}\|J^*S_n^*Q^*z^*\|=0.$  This implies \cite{Kalton1974} that $(QS_nJ)_{n=1}^{\infty}$ is a weakly null sequence in $\mathcal K(X,Y/X).$

Next consider $\tilde R_n:c_0(Y)\to Y$ defined by $\tilde R_n(y_k)_{k=1}^\infty= R_ny_n.$  Then $\tilde R_n^*:Y^*\to\ell_1(Y^*)$ is given by $\tilde R_n^*y^*= (0,\ldots,0,R_n^*y^*,0,\ldots)$ with the non-zero entry in the $n$th slot.
Since the basis of $Y$ is shrinking we have $\lim_{n\to\infty}\|R_n^*y^*\|=0$ for $y^*\in Y^*$ and so also
$\lim_{n\to\infty}\|\tilde R_n^*y^*\|=0.$  This implies that $(\tilde R_n)_{n=1}^{\infty}$ is weakly null in $\mathcal K(c_0(Y),Y)$ again using \cite{Kalton1974}.

Combining these statements with Mazur's theorem for any $n$ we can find $r>n$ and $(\alpha_j)_{j=n+1}^r$ with $\alpha_j\ge 0$ and $\sum_{j=n+1}^r\alpha_j=1$ such that
$$ \|\sum_{j=n+1}^r\alpha_j QS_jJ\|<\nu,\ \quad \|\sum_{j=n+1}^r \alpha_j\tilde R_j\|<\tfrac14\nu.$$
Let $T=\sum_{j=n+1}^r\alpha_jS_j.$  Then $\|QT\|_{X\to Y/X}=\|QTJ\|<\nu.$

Also if $y^*\in Y^*,$ using \eqref{22},
\begin{align*}\|J^*(I-2T)^*y^*\|&\le \sum_{j=n+1}^r\alpha_j\|J^*(I-2S_j)^*y^*\|\\
&\le (1+\tfrac12\nu)\|y^*\|+2\sum_{j=n+1}^r\alpha_j\|R_j^*y^*\| \\
&=(1+\tfrac12\nu)\|y^*\|+2\|\sum_{j=n+1}^r\alpha_j\tilde R_j^*y^*\|.\end{align*}
By the selection of $\alpha_j$ this implies that $\|(I-2T)J\|=\|I-2T\|_{X\to Y}<1+\nu.$
This completes the proof of the Lemma.\end{proof}

We now turn to the proof of the Theorem.  Using  Lemma \ref{inter} and Lemma \ref{products} we can find a sequence of convex combinations
$$ T_j=\sum_{i=N_{j-1}+1}^{N_j}\alpha_iS_i$$ where $N_0=0<N_1<N_2<\cdots$ and $\alpha_i\ge 0$ are such that $
\sum_{i=N_{j-1}+1}^{N_j}\alpha_i=1$ for all $j$ with the property that
$$ \|(I-2T_{n_1})(I-2T_{n_2})\ldots(I-2T_{n_k})\|_{X\to Y}<1+\delta$$ whenever $n_1<n_2<\cdots<n_k.$
Note that the $(T_j)_{j=1}^{\infty}$ are a commuting approximating sequence in $Y$ and that $T_jT_k=T_k$ if $j>k.$

Let $A_j=T_j-T_{j-1}$ where $T_0=0.$  We now repeat a calculation in \cite{CasazzaKalton1990} Theorem 3.8 with a correction to a small misprint.  Note that if $\epsilon_j=\pm1$ we have
$$\epsilon_nT_n\prod_{j=1}^{n-1}(I-T_{n-j}+\epsilon_{n-j+1}\epsilon_{n-j}T_{n-j})=\sum_{j=1}^n\epsilon_jA_j.$$
(Here the index $n-j+1$ replaces $n-j-1.$)
 Since $T_n=\frac12(I-(I-2T_n))$, it follows that
$$ \|\sum_{j=1}^n\epsilon_jA_j\|_{X\to Y}<1+\delta.$$  The result now follows by Proposition \ref{cleanup}.
\end{proof}

\begin{Cor}\label{reflex}  Let $X$ be a reflexive Banach space.  Then $X$ has property $(au)$ if and only if for any $\delta>0$ there is a reflexive Banach space $Y$ with a 1-unconditional basis and a subspace $X_{\delta}$ of $Y$ such that $d(X,X_{\delta})<1+\delta.$\end{Cor}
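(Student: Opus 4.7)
The plan is to deduce this corollary from Theorem \ref{main} by threading the reflexivity hypothesis through the ingredients already in hand, using Proposition \ref{*-auau}(b) at the two endpoints to interchange $(au)$ and $(au^*)$ freely for a reflexive $X$.

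For the nontrivial direction $(i)\Rightarrow (ii)$, I would first apply Proposition \ref{*-auau}(b) to upgrade $(au)$ to $(au^*)$, and then replay the proof of Theorem \ref{main}. Zippin's theorem, in its reflexive form, embeds $X$ isometrically in a reflexive space $Y$ with a shrinking basis; the remainder of the argument of Theorem \ref{main} then proceeds verbatim and produces a uniformly $(1+\delta)$-unconditional sequence $(A_j)$ of finite-rank operators from $X$ to $Y$ with $\sum_{j=1}^{\infty} A_j x = x$. The final step is to invoke Proposition \ref{cleanup}(ii) in place of (i), which yields a reflexive $Z$ with a shrinking $1$-unconditional basis into which $X$ is $(1+\delta)$-embedded; the word ``shrinking'' may then be dropped from the conclusion since any basis of a reflexive space is automatically shrinking.

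For the direction $(ii)\Rightarrow (i)$, a $1$-unconditional basis of a reflexive $Y$ is shrinking, so Proposition \ref{subspaces}(i) gives $Y$ property $(au^*)$, and part (ii) of the same proposition transfers $(au^*)$ to the subspace $X_\delta$. The remaining task is an almost-isometric transfer of $(au^*)$ across the $(1+\delta)$-isomorphism $T_\delta : X \to X_\delta$, performed uniformly in $\delta>0$. Normalizing so that $\|T_\delta\|\le 1$ and $\|T_\delta^{-1}\|\le 1+\delta$, the adjoint $(T_\delta^{-1})^*$ carries a weak$^*$-null sequence in $X^*$ to a weak$^*$-null sequence in $X_\delta^*$ while distorting norms by at most a factor of $1+\delta$. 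Applying $(au^*)$ on the $X_\delta$ side then yields
\[
\limsup_{n\to\infty}\bigl|\,\|x^*+x_n^*\|-\|x^*-x_n^*\|\,\bigr|\;\le\;C\delta,
\]
where $C$ depends on $x^*$ and the sequence but not on $\delta$. Since $\delta$ is arbitrary, $X$ has $(au^*)$, and a final application of Proposition \ref{*-auau}(b) converts $(au^*)$ back to $(au)$.

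The only step demanding any real work is the almost-isometric transfer in $(ii)\Rightarrow (i)$, and this reduces to the short distortion calculation sketched above; every other step is a direct citation of a proposition already proved.
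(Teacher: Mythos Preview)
Your proof is correct and follows essentially the same route as the paper, which simply cites Theorem \ref{main} together with Propositions \ref{*-auau} and \ref{cleanup}. One small simplification: you do not need the reflexive form of Zippin's theorem, since Proposition \ref{cleanup}(ii) requires only that $X$ be reflexive, not the ambient space $Y$; thus the proof of Theorem \ref{main} can be quoted verbatim and only its final invocation of Proposition \ref{cleanup} upgraded from (i) to (ii).
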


\begin{proof} This follows from the Theorem and Propositions \ref{*-auau} and \ref{cleanup}.\end{proof}

The next Corollary is due to Johnson and Zheng \cite{JohnsonZhenginprep} by a quite different proof.

\begin{Cor}\label{quotients}  Any quotient of a Banach space $X$ with a shrinking unconditional basis is isomorphic to a subspace of a Banach space with a shrinking unconditional basis.\end{Cor}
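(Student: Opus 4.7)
The plan is to reduce everything to Theorem \ref{main} by passing to a suitable equivalent norm on $X$ and then invoking the stability of $(au^*)$ under quotients.

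First, since $X$ has a shrinking unconditional basis, I would renorm $X$ with the equivalent norm
$$\||\sum a_j e_j\|| = \sup_{\epsilon_j = \pm 1}\bigl\|\sum \epsilon_j a_j e_j\bigr\|,$$
so that $(e_j)$ becomes a shrinking 1-unconditional basis for the renormed space, call it $\tilde X$. In particular $\tilde X$ possesses a shrinking 1-UFDD, so by Proposition \ref{subspaces}(i), $\tilde X$ has property $(au^*)$.

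Now suppose $Y$ is a quotient of $X$, say $Y = X/N$ for some closed subspace $N$. Let $\tilde Y = \tilde X/N$, equipped with its quotient norm; since the identity $X \to \tilde X$ is an isomorphism, $\tilde Y$ is isomorphic to $Y$. By Proposition \ref{subspaces}(ii), the quotient $\tilde Y$ of $\tilde X$ inherits property $(au^*)$.

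Finally, applying Theorem \ref{main} to $\tilde Y$ (with any fixed $\delta>0$) yields a Banach space $Z$ with a shrinking 1-unconditional basis together with a subspace $Z_\delta \subseteq Z$ with $d(\tilde Y, Z_\delta) < 1+\delta$. Composing with the isomorphism $Y \cong \tilde Y$ exhibits $Y$ as isomorphic to a subspace of $Z$, completing the proof. There is no real obstacle here beyond the initial bookkeeping of renorming so that the 1-unconditional hypothesis of Proposition \ref{subspaces}(i) is available; all the substantive work has been done in Theorem \ref{main}.
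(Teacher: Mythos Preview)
Your proof is correct and follows exactly the same route as the paper's own (very terse) argument: renorm so the basis is shrinking 1-unconditional, invoke Proposition~\ref{subspaces}(i) and (ii) to get $(au^*)$ on the quotient, then apply Theorem~\ref{main}. The paper compresses all of this into a single sentence, but you have simply unpacked the details.
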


\begin{proof} $X$ can be renormed to have $(au^*)$ and so this follows from Proposition \ref{subspaces}.\end{proof}

\vskip.5truein

\section{Skipped unconditional bases}\setcounter{equation}{0}

Let us say that a basic sequence $(e_k)_{k=1}^{N}$ (where $1\le N\le \infty$) in a (finite or infinite-dimensional) Banach space $X$ is {\it skipped $\lambda$-unconditional} if whenever $0=m_0<m_1<\cdots<m_n<\infty$ with $m_j-m_{j-1}\ge 2$ for $1\le j\le n$  and $y_j\in [e_i]_{m_{j-1}+1}^{m_j-1}$ then for any choice of signs $(\epsilon_j)_{j=1}^n,$
$$ \|\sum_{j=1}^n\epsilon_jy_j\|\le \lambda \|\sum_{j=1}^ny_j\|.$$  We shall say that $(e_k)_{k=1}^{\infty}$ is {\it asymptotically skipped 1-unconditional} if for every $\lambda>1$ there exists $n$ so that if $x\in [e_k]_{k=1}^n\setminus\{0\}$  then the basic sequence $\{x,e_{n+1},e_{n+2},\ldots\}$ is skipped $\lambda-$unconditional.

We will define a basis $(f_k)_{k=1}^N$ of a finite-dimensional Banach space to be {\it dual skipped $\lambda$-unconditional} when the dual basis $(f_k^*)_{k=1}^N$ is skipped $\lambda-$unconditional.  We will need the following simple Lemma:

\begin{Lem}\label{elem}  Let $X$ be a Banach space with a  basis $(e_k)_{k=1}^N$ where $1\le N\le\infty.$  Suppose $1\le m_1<m_2<\cdots<m_n<N,$ and that for every $x\in [e_j]_{j=1}^{m_1}$ the basic sequence $\{x,(e_k)_{k=m_1+1}^{N}\}$ is skipped $\lambda-$unconditional.   Suppose $x^*,y^*\in X^*\setminus\{0\}$ are such that $x^*\in [e_k^*]_{k=1}^{m_1},\ y^*(e_j)=0$ for $1\le j\le m_n.$  Then $\{x^*,e^*_{m_2},\ldots,e^*_{m_{n-1}},y^*\}$ is a dual skipped $\lambda$-unconditional basis of its linear span.

In particular if $(e_k)_{k=1}^N$ is skipped $\lambda-$unconditional then the finite sequence $\{x^*,e^*_{m_2},\ldots,e^*_{m_{n-1}},y^*\}$ is a dual skipped $\lambda$-unconditional basis of its linear span.
\end{Lem}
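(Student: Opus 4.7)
The plan is to dualize. For $F := \mathrm{span}\{f_1,\ldots,f_n\} \subset X^*$ (with $f_1 = x^*$, $f_i = e^*_{m_i}$ for $2 \le i \le n-1$, $f_n = y^*$), the principle of local reflexivity yields the isometric identity
\[
 \|z\|_{F^*} \;=\; \inf\{\|g\|_X : g \in X,\ f_i(g) = z(f_i) \text{ for all } i\}.
\]
So, given a skipped partition $0 = p_0 < p_1 < \cdots < p_k$ of $\{1,\ldots,n\}$ with $p_j - p_{j-1} \ge 2$, signs $(\epsilon_j)_{j=1}^k$, and $z_j \in \mathrm{span}\{f^*_\ell : \ell \in B_j\}$ where $B_j = \{p_{j-1}+1,\ldots,p_j-1\}$, it will suffice to produce, for any $g \in X$ with $f_i(g) = d_i := (\sum_j z_j)(f_i)$, a companion $g' \in X$ with $f_i(g') = \epsilon_{j(i)} d_i$ (for $i \in B_{j(i)}$) or $0$ (for $i$ skipped), and $\|g'\| \le \lambda \|g\|$.

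The crucial remark is that a skipped index $i \in \{2,\ldots,n-1\}$ already forces $a_{m_i} = 0$ in the expansion $g = \sum_\alpha a_\alpha e_\alpha$, since $f_i = e^*_{m_i}$ and $d_i = 0$. Hence the positions $m_{p_1},\ldots,m_{p_{k-1}}$ already carry coefficient $0$ in $g$ and will serve as honest skipped positions in a sign-flip argument. Setting $x := \sum_{\alpha=1}^{m_1} a_\alpha e_\alpha$ (the head of $g$) and invoking the hypothesis, the basic sequence $h_1:=x$, $h_\beta:=e_{m_1+\beta-1}$ ($\beta \ge 2$) is skipped $\lambda$-unconditional and contains $g$ in its closed span via $g = h_1 + \sum_{\beta \ge 2} a_{m_1+\beta-1}\, h_\beta$.

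Writing $\mu_i := m_i - m_1 + 1$, I will take as super-blocks of $(h_\beta)$ the intervals $[\mu_{p_{j-1}}+1,\mu_{p_j}-1]$ for $1 \le j < k$ together with an extended final super-block $[\mu_{p_{k-1}}+1,\infty)$, which absorbs the unconstrained coefficient $a_{m_n}$ and the tail on which $y^*$ acts. The inequalities $m_{p_j} - m_{p_{j-1}} \ge p_j - p_{j-1} \ge 2$ and $\mu_{p_1} \ge 2$ make this a valid skipped partition of the $h$-indices, and the previous remark ensures that $g$ vanishes at each skipped $h$-position $\mu_{p_j}$ ($1 \le j < k$). Letting $\tilde y_j$ be the part of $g$ supported in super-block $j$, one has $g = \sum_j \tilde y_j$ identically and sets $g' := \sum_j \epsilon_j \tilde y_j$. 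The verification $f_i(g') = \epsilon_{j(i)} d_i$ is immediate from the support properties: $x^*$ sees only super-block $1$, $y^*$ only super-block $k$, and each middle $e^*_{m_i}$ picks up one coefficient from the super-block containing its block.

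The norm bound $\|g'\| \le \lambda \|g\|$ then follows from skipped $\lambda$-unconditionality of $(h_\beta)$ after truncating the final super-block at some large index $M$ and letting $M \to \infty$ (valid because $(h_\beta)$ is a basic sequence containing $g$). Letting $\|g\| \to \|z\|_{F^*}$ yields $\|\sum_j \epsilon_j z_j\|_{F^*} \le \lambda \|\sum_j z_j\|_{F^*}$, as required. The ``in particular'' clause falls out because, when $(e_k)$ itself is skipped $\lambda$-unconditional, any skipped partition of $\{x,e_{m_1+1},\ldots\}$ transfers by an endpoint shift to a valid skipped partition of $(e_k)$, so the hypothesis is automatic. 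I expect the main subtlety to be the case analysis at the right end (whether $n$ belongs to $B_k$, equals $p_k$, or sits in a tail of skipped indices past $p_k$); absorbing everything past $\mu_{p_{k-1}}$ into the final super-block should dispose of all these cases uniformly.
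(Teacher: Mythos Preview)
Your argument is correct and is exactly the approach the paper takes, only with the details written out. The paper defines $T:X\to\mathbb R^n$ by $Tx=(x^*(x),e^*_{m_2}(x),\ldots,e^*_{m_{n-1}}(x),y^*(x))$, puts the quotient norm $\|\xi\|=\inf\{\|x\|:Tx=\xi\}$ on $\mathbb R^n$, and then asserts in one line that the canonical basis of $(\mathbb R^n,\|\cdot\|)$ is skipped $\lambda$-unconditional and that its biorthogonal functionals are isometric to $\{x^*,e^*_{m_2},\ldots,e^*_{m_{n-1}},y^*\}$; your identity $\|z\|_{F^*}=\inf\{\|g\|:f_i(g)=z(f_i)\}$ is precisely this quotient norm, and your construction of $g'$ from $g$ is the ``easy to check'' that the paper omits. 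One cosmetic remark: the appeal to local reflexivity is more than you need, since for finite-dimensional $F\subset X^*$ the isometry $F^*\cong X/F_\perp$ is immediate from Hahn--Banach (or Helly's theorem).
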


\begin{proof} Define a map $T:X\to \mathbb R^n$ by $$Tx=(x^*(x),e^*_{m_2}(x),\ldots,e^*_{m_{n-1}}(x),y^*(x))$$ and consider the quotient norm $\|\xi\|=\inf\{\|x\|:Tx=\xi\}$ on $\mathbb R^n.$  Then it is easy to check that the canonical basis of $\mathbb R^n$ is skipped $\lambda$-unconditional and its biorthogonal functionals are isometric to
$\{x^*,e^*_{m_2},\ldots,e^*_{m_{n-1}},y^*\}$.\end{proof}

Our next result concerns the unconditionality of the biorthogonal sequence $(e_k^*)_{k=1}^{\infty}$ in $X^*$.  If $\mathbb A$ is a finite subset of $\mathbb N$ we denote by $\text{ubc}(e_j^*)_{j\in\mathbb A}$ the unconditional basis constant of $(e_j^*)_{j\in\mathbb A}.$

\begin{Lem}\label{biorth} Let $X$ be a finite-dimensional Banach space with a skipped $1-$unconditional basis $(e_k)_{k=1}^{2N+1}$. Suppose $\text{ubc}(e_{2j-1}^*)_{j=1}^{N+1}=\mu>1.$   Then
$$ \text{ubc}(e_j^*)_{j=1}^{2N+1}\ge 1 +2(\mu-1).$$
\end{Lem}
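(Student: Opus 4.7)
The plan is to transfer the question from $X^*$ to $X$ via duality and then apply a short triangle-inequality argument.

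First I will set up the duality. Each diagonal sign-flip operator $T_\sigma$ on $X$ defined by $T_\sigma e_j = \sigma_j e_j$ is the adjoint of the corresponding operator on $X^*$, so $\text{ubc}(e^*_j)_{j=1}^{2N+1} = \text{ubc}(e_j)_{j=1}^{2N+1}$, the latter taken in $X$. Similarly, setting $G := [e^*_{2j-1}]_{j=1}^{N+1} \subset X^*$ and noting that $G^* \cong X/[e_{2k}]_{k=1}^{N}$ with dual basis $(\bar e_{2j-1})$ (the images of $e_{2j-1}$ under the quotient map), one has $\text{ubc}(\bar e_{2j-1})_{X/[e_{2k}]} = \mu$. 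It therefore suffices to show that $\text{ubc}(e_j)_X \ge 2\mu-1$.

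Next I extract an extremal configuration from the quotient: by finite-dimensionality choose coefficients $(a_j)_{j=1}^{N+1}$ and signs $(\epsilon_j)$ so that, setting $y := \sum_j a_j e_{2j-1}$ and $\hat y := \sum_j \epsilon_j a_j e_{2j-1}$, one has $\|\bar y\|_{X/[e_{2k}]} = 1$ and $\|\bar{\hat y}\|_{X/[e_{2k}]} = \mu$. The crucial observation is that applying the skipped 1-unconditionality of $(e_k)$ with singleton blocks at the odd indices (all gaps equal to $2$) shows that $(e_{2j-1})_{j=1}^{N+1}$ is already 1-unconditional in $X$. Hence $\|y\|_X = \|\hat y\|_X$, and since the $X$-norm dominates the quotient norm, $\|y\|_X = \|\hat y\|_X \ge \mu$.

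Now I pick $b \in [e_{2k}]_{k=1}^N$ attaining the quotient infimum, so that $\|y + b\|_X = \|\bar y\|_{X/[e_{2k}]} = 1$ (achieved by finite-dimensionality). The identity $(y+b)+(y-b) = 2y$ and the triangle inequality give
$$\|y+b\|_X + \|y-b\|_X \ge 2\|y\|_X \ge 2\mu,$$
so $\|y-b\|_X \ge 2\mu-1$. But $y-b$ is the image of $y+b$ under the sign-flip operator $T_\sigma$ with $\sigma_{2j-1} = +1$ and $\sigma_{2k} = -1$, so this single sign-flip realises the ratio $(2\mu-1)/1$, yielding $\text{ubc}(e_j)_X \ge 1 + 2(\mu-1)$ as required.

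There is no serious obstacle once the duality set-up is in place: the main ingredient is then the lower bound $\|y\|_X \ge \mu$, which uses only that skipped 1-unconditionality forces the odd sub-basis to be 1-unconditional in $X$, and a single triangle inequality finishes the argument. If any step needs care, it is verifying that the sub-basis ubc on $X^*$ corresponds isometrically to the quotient ubc on $X/[e_{2k}]$, but this is a routine finite-dimensional computation.
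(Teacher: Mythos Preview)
Your proof is correct and is essentially the paper's argument run through a duality. The paper stays in $X^*$: it picks $(\alpha_j),(\epsilon_j)$ realizing $\mu$, restricts $\sum\epsilon_j\alpha_j e^*_{2j-1}$ to $E=[e_{2j-1}]$ (the dual of your passage to the quotient $X/[e_{2k}]$), uses skipped $1$-unconditionality of the odds to get norm $\le 1$ there, extends by Hahn--Banach to $\sum\epsilon_j\alpha_j e^*_{2j-1}+\sum\beta_j e^*_{2j}$ of norm $\le 1$ (the dual of your choice of $b$ attaining the quotient infimum), and finishes with the same triangle inequality and even-coordinate sign flip. Your initial duality transfer is valid but not needed; otherwise the two arguments coincide step for step.
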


\begin{proof}  By assumption there exist real numbers $(\alpha_j)_{j=1}^{N+1}$ and signs $(\epsilon_j)_{j=1}^{N+1}$ so that
$$ \|\sum_{j=1}^{N+1}\alpha_je_{2j-1}^*\|=1$$ and $$ \|\sum_{j=1}^{N+1}\epsilon_j\alpha_{j}e_{2j-1}^*\|=\mu.$$
Let $E=[e_{2j-1}]_{j=1}^{N+1}.$  Then we have
$$ \|\sum_{j=1}^{N+1}\alpha_je_{2j-1}^*|_E\|\le 1$$ and so by the skipped unconditionality condition,
$$ \|\sum_{j=1}^{N+1}\epsilon_j\alpha_{j}e_{2j-1}^*|_E\|\le 1.$$  By the Hahn-Banach theorem there exists $(\beta_j)_{j=1}^N$ such that
$$ \|\sum_{j=1}^{N+1}\epsilon_j\alpha_{j}e_{2j-1}^*+\sum_{j=1}^N\beta_je_{2j}^*\|\le 1.$$
Thus
$$ 2\mu \le 1+\|\sum_{j=1}^{N+1}\epsilon_j\alpha_{j}e_{2j-1}^*-\sum_{j=1}^N\beta_je_{2j}^*\|$$ so that
$$ \text{ubc}(e_j^*)_{j=1}^{2N+1}\ge 2\mu-1.$$
\end{proof}

\begin{Lem}\label{induct} Suppose $N\in\mathbb N$.  Let $X$ be a Banach space of dimension $2^{N}+1$ with a dual skipped $1-$unconditional basis $(f_k)_{k=1}^{2^N+1}$. Suppose $\text{ubc}(f_1,f_{2^N+1})=\mu>1.$   Then
$$ \text{ubc}(f_j)_{j=1}^{2^N+1}\ge 1 +2^N(\mu-1).$$
\end{Lem}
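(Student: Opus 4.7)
The plan is to induct on $N$. For the base case $N = 1$ (so $\dim X = 3$), the dual skipped 1-unconditional hypothesis means that $(f_k^*)_{k=1}^3$ is a skipped 1-unconditional basis of $X^*$, and Lemma \ref{biorth} applied to $X^*$ gives immediately $\text{ubc}(f_j)_{j=1}^3 \geq 2\mu - 1 = 1 + 2^1(\mu-1)$, using the hypothesis $\text{ubc}(f_1,f_3) = \mu$.

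For the inductive step with $N \geq 2$, the idea is to interpose the odd-indexed subsequence $(f_{2l-1})_{l=1}^{2^{N-1}+1}$, whose extremal elements are still $f_1$ and $f_{2^N+1}$, so that its endpoint ubc remains $\mu$. The crux is to verify that this subsequence is itself a dual skipped 1-unconditional basis of its linear span $W \subset X$; once this is in hand, the inductive hypothesis applied to $W$ yields $\text{ubc}(f_{2l-1})_{l=1}^{2^{N-1}+1} \geq 1 + 2^{N-1}(\mu-1)$.

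This verification is exactly what Lemma \ref{elem} is designed to deliver. I would apply its ``in particular'' clause to $X^*$ with the skipped 1-unconditional basis $(e_k) = (f_k^*)_{k=1}^{2^N+1}$, taking $m_1 = 1, m_2 = 3, \ldots, m_{2^{N-1}} = 2^N - 1$ and $m_{2^{N-1}+1} = 2^N$, together with $x^* = f_1 \in [e_1^*]$ and $y^* = f_{2^N+1}$ (which satisfies $y^*(f_j^*) = 0$ for $j \leq 2^N$, as required). Since $m_n = 2^N < 2^N+1$, the hypotheses of Lemma \ref{elem} are met, and its conclusion is exactly that $\{f_1, f_3, \ldots, f_{2^N-1}, f_{2^N+1}\}$ is a dual skipped 1-unconditional basis of its linear span.

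A final application of Lemma \ref{biorth} to $X^*$, now with the ``$N$'' of that lemma taken to be $2^{N-1}$ so that $2N+1 = 2^N+1$, promotes the sub-ubc bound to the full ubc bound: the odd-indexed duals in that lemma are precisely $(f_{2l-1})_{l=1}^{2^{N-1}+1}$, which have ubc $\geq 1 + 2^{N-1}(\mu-1)$ by the previous step, so $\text{ubc}(f_j)_{j=1}^{2^N+1} \geq 2\bigl(1 + 2^{N-1}(\mu-1)\bigr) - 1 = 1 + 2^N(\mu-1)$, closing the induction. The main obstacle I anticipate is the index bookkeeping in the Lemma \ref{elem} application; once the correct $m_i$'s, $x^*$, and $y^*$ are identified, everything else reduces to fitting the two existing lemmas together through the recursion.
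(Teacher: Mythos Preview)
Your proposal is correct and follows essentially the same approach as the paper's proof: induct on $N$, use Lemma~\ref{elem} to see that the odd-indexed subsequence $(f_{2l-1})_{l=1}^{2^{N-1}+1}$ is again dual skipped $1$-unconditional, apply the inductive hypothesis to it, and then finish with Lemma~\ref{biorth}. Your index bookkeeping in the application of Lemma~\ref{elem} is fine.
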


\begin{proof}  This is proved by induction on $N.$  If $N=1$ it is immediate from Lemma \ref{biorth}.  Suppose now that the Lemma is proved for $N-1.$  Then $\{f_1,f_3,\ldots,f_{2^N+1}\}$ is a dual skipped $1-$unconditional basis of its linear span by Lemma \ref{elem}.  By the inductive hypothesis
$$ \text{ubc}(f_{2j-1})_{j=1}^{2^{N-1}+1}\ge 1+ 2^{N-1}(\mu-1).$$
Now applying Lemma \ref{biorth} we have
$$ \text{ubc}(f_{j})_{j=1}^{2^{N}+1}\ge 1+2^{N}(\mu-1).$$\end{proof}

\begin{Prop}\label{skipped2} Let $X$ be a Banach space containing no copy of $\ell_1$ and with a skipped unconditional basis $(e_k)_{k=1}^{\infty}.$  Then:\newline (i) $(e_k)_{k=1}^{\infty}$ is shrinking, and\newline
(ii) If $X$ contains no copy of $c_0$ then either $X$ is reflexive or $X$ is quasi-reflexive of order one.\end{Prop}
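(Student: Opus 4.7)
For part (i), I argue by contradiction. Suppose $(e_k)$ is not shrinking; then standard basis theory produces a normalized block basic sequence $(y_n)$ of $(e_k)$ together with $x^*\in X^*$ satisfying $x^*(y_n)\geq \delta>0$ for all $n$. After thinning to a subsequence, the supports of $(y_n)$ can be arranged to be separated by at least one skipped index, whereupon the skipped $\lambda$-unconditionality of $(e_k)$ directly gives that $(y_n)$ is $\lambda$-unconditional in the ordinary sense. The classical James-type argument then yields equivalence with the $\ell_1$-basis: for $a_n\geq 0$, $\|\sum a_n y_n\|\geq x^*(\sum a_n y_n)/\|x^*\|\geq (\delta/\|x^*\|)\sum a_n$; and choosing $\epsilon_n=\sgn(a_n)$ upgrades this via unconditionality to $\|\sum a_n y_n\|\geq \delta(\lambda\|x^*\|)^{-1}\sum|a_n|$ for arbitrary scalars, which together with the trivial upper bound $\sum|a_n|$ forces $\ell_1\hookrightarrow X$, contradicting the hypothesis.

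For part (ii), by (i) the basis $(e_k)$ is shrinking, so $X^{**}$ is identified with the sequences $(a_k)$ having bounded partial sums and $X$ with its subspace of norm-convergent expansions. If $X$ is reflexive we are done; otherwise I aim to show $\dim X^{**}/X=1$. The key preliminary observation is that for any gap-separated block FDD $(W_j)$ with $W_j\subset [e_k]_{k\in I_j}$, the skipped unconditionality provides a $\lambda$-UFDD structure on $[W_j]$. Since $[W_j]\subset X$ contains no $\ell_1$ and no $c_0$, standard James--Zippin theory for UFDD forces $(W_j)$ to be both shrinking and boundedly complete, so $[W_j]$ is reflexive. Consequently any semi-normalized sequence $(w_j)$ with $w_j\in W_j$ must satisfy $\|\sum_{j=1}^N w_j\|\to\infty$.

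Now assume for contradiction that $\dim X^{**}/X\geq 2$ and pick $x^{**},y^{**}\in X^{**}$ linearly independent modulo $X$. Choose gap-separated intervals $I_j=(n_j,m_j]$ along which both $u_j:=P_{I_j}x^{**}$ and $v_j:=P_{I_j}y^{**}$ have norm $\geq\delta$; set $W_j=\text{span}(u_j,v_j)$. The preliminary observation makes $[W_j]$ reflexive, so for every $(\alpha,\beta)\neq 0$ the partial sums $\|\sum_{j=1}^N(\alpha u_j+\beta v_j)\|$ diverge. Writing $U_N=\sum u_j$, $V_N=\sum v_j$, $R_N=P_{[1,m_N]\setminus\bigcup I_j}x^{**}$ and $R'_N$ analogously, one has $U_N+R_N=S_{m_N}x^{**}$ bounded and similarly for $V_N+R'_N$; hence $\alpha U_N+\beta V_N\approx-(\alpha R_N+\beta R'_N)$ up to a bounded error—the $A$- and $A^c$-projections must cancel in direction. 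The complementary blocks $(\tilde u_j,\tilde v_j)$ (supports between consecutive $I_j$) form another gap-separated FDD to which the same reflexivity applies, so the complementary partial sums also diverge in every direction. Exploiting this two-dimensional cancellation by a compactness and diagonal extraction should yield a block basic sequence in $X$ equivalent to the $c_0$-basis, contradicting the no-$c_0$ hypothesis and giving $\dim X^{**}/X\leq 1$.

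The main obstacle is precisely this final extraction of a $c_0$-basic block sequence. Both the $A$- and $A^c$-projections diverge in every direction of the two-dimensional parameter space $(\alpha,\beta)\in\mathbb R^2\setminus\{0\}$, while their sum remains bounded in $X$; one needs a scaling and sign pattern concretely realising the $c_0$-upper estimate $\|\sum\epsilon_j w_j\|\leq C\max_j\|w_j\|$ uniformly, leveraging the two-dimensional freedom unavailable when $\dim X^{**}/X\leq 1$. I expect this to be carried out by an inductive construction that uses the shrinking basis from (i) to localize on finite-codimensional subspaces of $X^*$, selecting at each stage a direction in $W_j$ that cancels previously accumulated contributions up to tolerances summing to a uniform $c_0$-bound.
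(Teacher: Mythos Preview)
Your argument for (i) is correct and close in spirit to the paper's, which is marginally slicker: for any normalized block basic sequence $(u_k)$, the subsequences $(u_{2k})$ and $(u_{2k-1})$ each have skipped supports and are therefore $\lambda$-unconditional; since $\ell_1\not\hookrightarrow X$ they are weakly null, hence so is $(u_k)$, and the basis is shrinking.

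For (ii) there is a genuine gap, which you yourself identify: the extraction of a $c_0$-block sequence from the hypothesis $\dim X^{**}/X\ge 2$ is left as a hoped-for inductive construction and is not carried out. Even earlier, the choice of gap-separated intervals $I_j$ on which \emph{both} $\|P_{I_j}x^{**}\|\ge\delta$ and $\|P_{I_j}y^{**}\|\ge\delta$ is unjustified: non-convergence of the partial sums of $x^{**}$ and of $y^{**}$ gives such intervals for each separately, but there is no reason they can be taken to coincide. The subsequent cancellation heuristics depend on this.

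The paper's proof of (ii) bypasses all of this with a short scalar argument. Since the basis is shrinking, identify $x^{**}\in X^{**}$ with its coefficient sequence $(x^{**}(e_k^*))_k$. \emph{Step 1:} if $\liminf_k|x^{**}(e_k^*)|=0$, pick $m_0=0<m_1<m_2<\cdots$ with $|x^{**}(e_{m_k}^*)|<2^{-k}$. The blocks $w_k=\sum_{i=m_{k-1}+1}^{m_k-1}x^{**}(e_i^*)e_i$ have skipped supports, so $\sum_k w_k$ is WUC by skipped unconditionality, hence convergent in $X$ since $c_0\not\hookrightarrow X$; adding the absolutely convergent series $\sum_k x^{**}(e_{m_k}^*)e_{m_k}$ shows $x^{**}\in X$. \emph{Step 2:} if $x_0^{**}\in X^{**}\setminus X$, then by Step~1 we have $\liminf_k|x_0^{**}(e_k^*)|>0$. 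For arbitrary $x^{**}\in X^{**}$ the bounded scalar sequence $x^{**}(e_k^*)/x_0^{**}(e_k^*)$ (defined for large $k$) has an accumulation point $\lambda$, so $\liminf_k|(x^{**}-\lambda x_0^{**})(e_k^*)|=0$ and Step~1 gives $x^{**}-\lambda x_0^{**}\in X$. Hence $\dim X^{**}/X\le 1$.

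The point you are missing is that the single scalar functional $e_k^*\mapsto x^{**}(e_k^*)$ already detects membership in $X$ up to a one-dimensional ambiguity; no two-dimensional compactness or $c_0$-extraction is needed.
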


\begin{proof} (i) Let $(u_k)_{k=1}^{\infty}$ be any normalized block basic sequence with respect to $(e_k)_{k=1}^{\infty}.$  Then $(u_{2k})_{k=1}^{\infty}$ (respectively $(u_{2k-1})_{k=1}^{\infty}$) is an unconditional basic sequence and hence weakly null; thus $(u_k)_{k=1}^{\infty}$ is weakly null.  Hence $(e_k)_{k=1}^{\infty}$ is shrinking.

(ii) We may assume $\|e_k\|=1$ for all $k.$ Suppose $x^{**}\in X^{**}$ is such that $\lim_{k\to\infty} x^{**}(e_k^*)=0.$  Select a strictly increasing sequence  $(m_k)_{k=0}^{\infty}$ (with $m_0=0$) such that $|x^{**}(e_{m_k}^*)|<2^{-k}$ for $k\ge 1.$  Then the series
$$\sum_{k=1}^{\infty}\left(\sum_{i=m_{k-1}+1}^{m_k-1}x^{**}(e_i^*)e_i\right)$$ is a WUC series and hence convergent in $X$.  On the other hand the series $\sum_{k=1}^{\infty} x^{**}(e^*_{m_k})e_k$ is absolutely convergent and so $x^{**}\in X.$

Now suppose $X$ is non-reflexive and $x_0^{**}\in X^{**}\setminus X.$  Then $\liminf_k |x_0^{**}(e_k^*)|>0.$ For any $x^{**}\in X^{**}$ we may find $\lambda\in\mathbb R$ so that $\liminf_k |(x^{**}-\lambda x_0^{**})(e_k^*)|=0$ and hence $x^{**}-\lambda x_0^{**}\in X.$
Thus $\dim X^{**}/X=1.$\end{proof}

\begin{Prop}\label{skippedau*} Let $X$ be a Banach space containing no copy of $\ell_1$ and with a normalized asymptotically skipped 1-unconditional basis $(e_k)_{k=1}^{\infty}.$ Suppose $X$ fails to have property $(au^*)$.  Then:\newline
(i) No subsequence of $(e_k^*)_{k=1}^{\infty}$ is unconditional, and \newline
(ii) Every spreading model of $(e_k)_{k=1}^{\infty}$ is equivalent to the standard $\ell_1-$basis.
\end{Prop}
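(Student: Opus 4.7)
The plan is to prove the contrapositive of each part: if (i) or (ii) fails, then $X$ must have property $(au^*)$, contradicting the hypothesis. By Proposition \ref{skipped2}(i), $(e_k)$ is shrinking, so $(e_k^*)$ is a boundedly complete basis of $X^*$. The main mechanism is an iteration of Lemma \ref{induct}: combined with Lemma \ref{elem} and the asymptotic skipped $1$-unconditionality, it lets me exhibit, for any $\lambda$ close to $1$, any $N\in\mathbb{N}$, and any witness $(x^*,(u_n^*))$ to the failure of $(au^*)$ with $\|x^*+u_n^*\|\to 1$ and $\|x^*-u_n^*\|\to\theta>1$, a finite basic sequence $\{x^*,e_{m_2}^*,\ldots,e_{m_{2^N}}^*,u_n^*\}$ of length $2^N+1$ in $X^*$. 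Here the $m_j$ are chosen large and interleaved between $\text{supp}(x^*)$ and $\text{supp}(u_n^*)$, after a gliding-hump reduction of $u_n^*$ to a block of $(e_k^*)$ and a density replacement of $x^*$ by a finitely supported functional; Lemma \ref{elem} makes this sequence dual skipped $\lambda$-unconditional, and a $\lambda$-perturbed form of Lemma \ref{induct} yields unconditional basis constant at least $1+2^N(\theta-1)-O(2^N(\lambda-1))$, hence unbounded in $N$ once $\lambda$ is chosen close enough to $1$.

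For (i), suppose some subsequence $(e_{k_j}^*)$ is $C$-unconditional. I choose the intermediate indices $m_j$ from $(k_j)$. I then want the endpoints $x^*,u_n^*$ of the bootstrapped basis to lie in $[e_{k_j}^*]$ as well, so the full length-$(2^N+1)$ basis sits inside the $C$-unconditional subsequence, forcing unconditional basis constant at most $C$ and contradicting the $2^N$ growth. The alignment step to produce a failure witness $(x^*,u_n^*)$ with both endpoints supported in $(k_j)$ is the delicate point: one restarts the $(au^*)$-failure with test functionals from $[e_{k_j}^*]$, using the unconditionality of $(e_{k_j}^*)$ to see that if $(au^*)$ were to hold for every weak-$*$ null sequence supported inside $(k_j)$, then (by a further gliding-hump of a general $u_n^*$ against the basis $(e_{k_j}^*)$) it would propagate to every weak-$*$ null sequence, contrary to our standing assumption.

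For (ii), asymptotic skipped $1$-unconditionality forces every spreading model $(\tilde e_j)$ of $(e_k)$ to be $1$-unconditional (the "skip" can always be placed past the prefix defining the spreading model). If some spreading model were not equivalent to the $\ell_1$-basis, the Ces\`aro averages $\|\tfrac{1}{N}\sum_{j=1}^{N}\tilde e_j\|$ would tend to $0$, so for suitable far-apart indices $(n_i)$ the convex blocks $\tfrac{1}{N}\sum_{i=1}^{N} e_{k_{n_i}}$ in $X$ have arbitrarily small norm. Pairing such a small-norm vector against the ubc-large functional produced by the bootstrap (with $m_j$ matched to $k_{n_i}$ and using Hahn--Banach duality to realize the alternating-sign functional) forces an upper bound on the unconditional constant that is incompatible with the $2^N$ blow-up. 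The principal technical obstacle in both parts is this alignment: in (i), placing a failure witness inside a prescribed unconditional subsequence; in (ii), arranging the small-norm convex block to test the exact ubc-large functional arising from Lemma \ref{induct}.
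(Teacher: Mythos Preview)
Your core mechanism---bootstrapping the failure of $(au^*)$ via Lemma~\ref{elem} and Lemma~\ref{induct} into a lower bound on the unconditional basis constant of finite pieces of $(e_k^*)$---is exactly the paper's engine. But you have misdiagnosed where the difficulty lies, and the step you flag as ``delicate'' (the alignment of the endpoints $x^*,u_n^*$ into the unconditional subsequence $[e_{k_j}^*]$) is both unnecessary and, as you propose to resolve it, not valid.

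The propagation argument you sketch for (i)---that $(au^*)$ for weak$^*$-null sequences supported in $(k_j)$ would force $(au^*)$ globally via gliding-hump---does not work: a general weak$^*$-null $(u_n^*)$ glides against the full basis $(e_k^*)$, not against a prescribed subsequence, and there is no mechanism to move the fixed functional $x^*$ into $[e_{k_j}^*]$ at all. Fortunately none of this is needed. The point you are missing is that adding two endpoints to a finite basic sequence changes its unconditional basis constant by at most an additive constant depending only on the basis constant $K$: if $\text{ubc}(e_{m_1}^*,\ldots,e_{m_n}^*)\le M$, then for \emph{any} $x^*$ supported before $m_1$ and $y^*$ supported after $m_n$, one has $\text{ubc}(x^*,e_{m_1}^*,\ldots,e_{m_n}^*,y^*)\le c_1K+c_2KM$ by peeling off the first and last terms with the basis projections. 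The paper runs exactly this estimate: assuming the conclusion of (i) fails (or more generally that $\text{ubc}(e_{m_1}^*,\ldots,e_{m_n}^*)$ is small for some tail choice of indices), it bounds the ubc of the augmented sequence $(x^*,e_{m_1}^*,\ldots,e_{m_n}^*,y_n^*)$ by roughly $\tfrac14(\mu-1)n$, passes to an ultrafilter limit to make it exactly dual skipped $1$-unconditional, and then Lemma~\ref{induct} gives ubc $\ge\tfrac12(\mu-1)(n+1)$, a contradiction. This yields the uniform estimate $\text{ubc}(e_{m_1}^*,\ldots,e_{m_n}^*)\ge c(\mu-1)n/K^2$ for \emph{all} $m_1<\cdots<m_n$ past some threshold $k(n)$, from which (i) is immediate with no alignment whatsoever.

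For (ii) your outline is vaguer than necessary. Once the estimate above is in hand, the paper's route is direct duality: from $\|\sum\alpha_je_{m_j}^*\|=1$ and $\|\sum\epsilon_j\alpha_je_{m_j}^*\|\ge cn$ one gets $\sum|\alpha_j|\ge cn/(2K)$, and pairing the norm-one functional against $\sum\text{sgn}(\alpha_j)e_{m_j}$ gives $\|\sum\eta_je_{m_j}\|\ge c'n$. Since spreading models of $(e_k)$ are $1$-unconditional, this forces $\|\tilde e_1+\cdots+\tilde e_n\|\ge c'n$ and hence $\ell_1$-equivalence. No matching of a ``small-norm convex block'' to a specific ubc-witness functional is required.
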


\begin{proof} Since $(e_k)_{k=1}^{\infty}$ is shrinking by Proposition \ref{skipped2} we can assume  the existence of $\mu>1$, $r\in\mathbb N$, $\alpha,\beta\in\mathbb R,$ $x^{*}\in [e_k^*]_{k=1}^r$ and a sequence $(y_n^*)_{n>r}$ with $y_n^*(e_j)=0$ for $j<n$ such that $\|x^*\|=\|y_n^*\|=1$ and $\|\alpha x^*-\beta y_n^*\|=1$ for all $n$ but $\|\alpha x^*+\beta y_n^*\|\ge \mu.$ Let $K$ be the basis constant of $(e_k)_{k=1}^{\infty}.$ We first argue that for $n\in\mathbb N$ with $n>80K/(\mu-1)$ there exists $k=k(n)$ so that if $k<m_1<m_2<\cdots<m_n$ then
\begin{equation}\label{ubc} \text{ubc}(e^*_{m_1},\ldots,e^*_{m_n})\ge \frac{(\mu-1)n}{10K^2}.\end{equation}

Assume not.  Then for each $k>r$ we may select $k<m_{k,1}<\ldots<m_{k,n}$ so that
$$ \text{ubc}(e^*_{m_{k,1}},\ldots,e^*_{m_{k,n}})\le \frac{(\mu-1)n}{10K^2}.$$  Hence since the basis constant of $(x^*,e^*_{m_{k,1}},\ldots,e^*_{m_{k,n}},y^*_{m_{k,n}+1})$ is at most $K$ we have that if $\epsilon_1,\epsilon_2,\ldots,\epsilon_{n+2}=\pm1$ and $\xi_1,\ldots,\xi_{n+2}$ are real numbers,
\begin{align*} &\|\epsilon_1\xi_1x^*+\sum_{j=1}^n\epsilon_{j+1}\xi_{j+1}e^*_{m_{k,j}}+\epsilon_{n+2}\xi_{n+2}y_{m_{k,n}+1}^*\|\\ &\le
|\xi_1|+|\xi_{n+2}|+\frac{(\mu-1)n}{10K^2}\|\sum_{j=1}^n\xi_{j+1}e^*_{m_{k,j}}\|\\
&\le \left(4K +\frac{(\mu-1)n}{5}\right)\|\xi_1x^*+ \sum_{j=1}^n\xi_{j+1}e^*_{m_{k,j}}+\xi_{n+2}y_{m_{k,n}+1}^*\|\\
&\le \frac{(\mu-1)n}{4}\|\xi_1x^*+ \sum_{j=1}^n\xi_{j+1}e^*_{m_{k,j}}+\xi_{n+2}y_{m_{k,n}+1}^*\|.\end{align*}
Thus
$$ \text{ubc}(x^*,e^*_{m_{k,1}},\ldots,e^*_{m_{k,n}},y^*_{m_{k,n}+1})\le \frac{\mu-1}{4}n.$$
Note the basis $(x^*,e^*_{m_{k,1}},\ldots,e^*_{m_{k,n}},y^*_{m_{k,n}+1})$ is dual $\lambda_k-$skipped where $\lim_k\lambda_k=1.$

Let us define a norm on $\mathbb R^{n+2}$ by
$$ \|(\xi_1,\ldots,\xi_{n+2})\|=\lim_{\mathcal U}\|\xi_1x^*+\sum_{j=1}^n\xi_{j+1}e^*_{m_{k,j}}+\xi_{n+2}y^*_{m_{k,n}+1}\|$$ where $\mathcal U$ is some non-principal ultrafilter.  The canonical basis  $(f_1,\ldots,f_{n+2})$ is then dual skipped 1-unconditional and $$\text{ubc}(f_1,\ldots,f_{n+2})\le \frac14(\mu-1)n.$$  Also $\text{ubc}(f_1,f_{n+2})\ge \mu.$  Hence by Lemma \ref{induct} (and utilizing Lemma \ref{elem} since $n+1$ need not be a power of $2$)
$$ \text{ubc}(f_1,\ldots,f_{n+2}) \ge \frac12(\mu-1)(n+1).$$
This gives a contradiction and \eqref{ubc} is established.

(i) is now immediate.

For (ii) observe that any spreading model of $(e_k)_{k=1}^{\infty}$ is 1-unconditional.  For any $n$ there exists $k(n)$ so that \eqref{ubc} holds.  Suppose $k(n)<m_1<\cdots<m_n.$  Then there exist $(\alpha_1,\ldots,\alpha_n)\in\mathbb R^n$ and $(\epsilon_1,\ldots,\epsilon_n)\in\{-1,1\}^n$  so that
$$ \|\sum_{j=1}^n\alpha_je^*_{m_j}\|=1,\qquad \|\sum_{j=1}^n\epsilon_j\alpha_je^*_{m_j}\|\ge \frac{\mu-1}{10K^2}n.$$
Since $\|e^*_k\|\le 2K$ we thus have
$$ \sum_{j=1}^n|\alpha_j|\ge \frac{\mu-1}{20K^3}n$$ and so
for a suitable choice of signs $\eta_j$ we have
$$ \|\sum_{j=1}^n\eta_je_{m_j}\| \ge \frac{\mu-1}{20K^3}n.$$
Thus in any spreading model with basis $(f_j)_{j=1}^\infty$ we have $\|f_1+\cdots+f_n\|\ge cn$ for suitable $c>0.$  This implies that $(f_j)_{j=1}^{\infty}$ is equivalent to the canonical basis of $\ell_1$ (since it is a 1-unconditional spreading model).
\end{proof}

\vskip.5truein

\section{The Weak Alternating Banach-Saks Property}\setcounter{equation}{0}

We recall that a Banach space $X$ is said to have the {\it Alternating Banach-Saks (ABS) property} if every bounded sequence $(x_n)_{n=1}^{\infty}$ in $X$ has a subsequence $(y_n)_{n=1}^{\infty}$ such that
\begin{equation}\label{ABS} \lim_{n\to\infty}\sup_{r_1<r_2<\cdots<r_n} \|\frac1n\sum_{j=1}^n(-1)^jy_{r_j}\|=0.\end{equation}
This is equivalent to the requirement that some spreading model of $(x_n)_{n=1}^{\infty}$ is not equivalent to the $\ell_1-$basis (see \cite{Beauzamy1979}).

We shall say that $X$ has the {\it Weak Alternating Banach-Saks (WABS) property} if every bounded sequence $(x_n)_{n=1}^{\infty}$ in $X$ has a convex block sequence $(y_n)_{n=1}^{\infty}$ such that \eqref{ABS} holds.  Here $(y_n)_{n=1}^{\infty}$ is a convex block sequence if
$$ y_n=\sum_{j=p_{n-1}+1}^{p_n}\lambda_jx_j$$ where $p_0=0<p_1<p_2<\cdots$, $\lambda_j\ge 0,$ and $\sum_{p_{n-1}+1}^{p_n}\lambda_j=1$ for every $n.$
Note that if $(y_n)_{n=1}^{\infty}$ satisfies \eqref{ABS} then so does every further sequence of convex blocks.

Let us recall at this point that a Banach space $X$ has Pe\l czy\'nski's property (u) if for every weakly Cauchy sequence
$\{x_n\}_{n=1}^{\infty}$ there is a weakly null sequence $(z_n)_{n=1}^{\infty}$ so that if $u_n=x_n-z_n$ then the series $\sum_{n=1}^{\infty}(u_n-u_{n-1})$ (where $u_0=0$) is weakly unconditionally Cauchy (WUC). Any Banach space with an unconditional basis has property (u) (\cite{Pelczynski1958}, \cite{LindenstraussTzafriri1979} p.31) Let us note the following, which shows the connection with the (WABS) property:

\begin{Prop}\label{u} Let $X$ be a separable Banach space.  Then $X$ contains no copy of $\ell_1$ and has property (u) if and only if every bounded sequence $(x_n)_{n=1}^{\infty}$ has a convex block sequence $(y_n)_{n=1}^{\infty}$ such that
\begin{equation}\label{u2} \sup_n \sup_{r_1<r_2<\cdots<r_n}\|\sum_{j=1}^n(-1)^jy_{r_j}\|<\infty.\end{equation}\end{Prop}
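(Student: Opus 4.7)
The plan is to prove both implications, using Rosenthal's $\ell_1$-theorem, Mazur's theorem, and a direct computation converting alternating sums over subsequences into WUC constants of associated series and vice versa.

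For the forward direction, given a bounded $(x_n)$, first extract a weakly Cauchy subsequence via Rosenthal's theorem (since $X$ has no copy of $\ell_1$), and apply property (u) to decompose $x_n=u_n+z_n$ with $(z_n)$ weakly null and differences $v_n:=u_n-u_{n-1}$ forming a WUC series with constant $M$. Using Mazur's theorem, choose convex blocks $\tilde z_n$ of $(z_n)$ with $\|\tilde z_n\|<2^{-n}$; the induced convex blocks $y_n=\tilde u_n+\tilde z_n$ of $(x_n)$ are the candidates. The $\tilde z$-contribution to $\sum_k(-1)^ky_{r_k}$ is absolutely summable. For the $\tilde u$-contribution, expand over the disjoint intervals $I_{r_k}$ supporting $\tilde u_{r_k}$, so $\sum_k(-1)^k\tilde u_{r_k}$ becomes a convex combination of sums $\sum_k(-1)^ku_{j_k}$ with $j_1<j_2<\cdots<j_n$. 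Each such sum telescopes into $\sum_ic_iv_i$ with $c_i\in\{-1,0,1\}$, which is bounded by $M$ since it equals $\tfrac12(\sum_i\epsilon_iv_i+\sum_i\epsilon_i'v_i)$ for suitable $\pm 1$-sign patterns.

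For the reverse direction, note first that if $\ell_1\hookrightarrow X$ via some sequence $(e_n)$, then any convex block of $(e_n)$ has alternating sums of norm growing linearly in $n$ by the $\ell_1$-lower estimate, so the convex-block hypothesis fails on $(e_n)$ and hence $X$ contains no copy of $\ell_1$. To establish property (u), take a weakly Cauchy $(x_n)$ with a convex block $(y_k)=\bigl(\sum_{j\in I_k}\lambda_jx_j\bigr)$ satisfying the alternating-sum bound $M$. First show $\sum_k(y_k-y_{k-1})$ is WUC by Abel summation: $\sum_k\epsilon_k(y_k-y_{k-1})=\epsilon_ny_n+2\sum_l\sigma_ly_{j_l}$, where $j_1<\cdots<j_m$ index the sign changes of $(\epsilon_k)$; the signs $\sigma_l$ automatically alternate (consecutive sign changes must toggle direction), so the second sum equals $\pm\sum_l(-1)^ly_{j_l}$ and is bounded by $M$. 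Then define $u_n:=y_{k(n)}$ where $k(n)$ is the index of the block containing $n$: the differences $u_n-u_{n-1}$ equal $y_k-y_{k-1}$ at the start of each block and $0$ elsewhere, so the WUC property transfers, while $x_n-u_n$ is weakly null because $(x_n)$ and $(u_n)$ share the same weak-star limit in $X^{**}$.

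I expect the main technical obstacle to lie in the forward direction, namely the bookkeeping needed to reduce $\sum_k(-1)^k\tilde u_{r_k}$ first to a convex combination of genuine alternating sums $\sum_k(-1)^ku_{j_k}$ over strictly increasing indices, and then to WUC-controlled expressions $\sum_ic_iv_i$ with $c_i\in\{-1,0,1\}$; the reverse direction's piecewise-constant interpolation, while slick, is short once the WUC property for $\sum_k(y_k-y_{k-1})$ has been established.
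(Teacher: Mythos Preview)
Your proposal is correct and follows essentially the same route as the paper's proof: Rosenthal plus the $(u)$-decomposition and Mazur in the forward direction, and the WUC property of $\sum(y_k-y_{k-1})$ together with weak$^*$-matching of limits in the reverse direction. The paper's argument is extremely terse (it simply asserts that $(\hat x_n)$ ``satisfies our requirements'' and that ``$x_n-y_n$ is weakly null''), and you have correctly supplied the missing details---in particular the Abel summation showing the alternating signs $\sigma_l$ must alternate, and the piecewise-constant interpolation $u_n=y_{k(n)}$ that repairs the index mismatch glossed over in the paper.
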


\begin{proof}  If $X$ contains no copy of $\ell_1$, we can assume  $(x_n)_{n=1}^{\infty}$ is weakly Cauchy \cite{Rosenthal1974}.  If $X$ has property (u) we can write $x_n=u_n+z_n$ where $(z_n)$ is weakly null and $\sum (u_n-u_{n-1})$ is a WUC series.  We may then pass to convex blocks $(\hat x_n)_{n=1}^{\infty}$ so that the corresponding convex blocks  $(\hat z_n)_{n=1}^{\infty}$ and $(\hat u_n)_{n=1}^{\infty}$ satisfy $\|\hat z_n\|<2^{-n}.$  Then $(\hat x_n)_{n=1}^{\infty}$ satisfies our requirements.

Conversely it is clear $X$ cannot contain $\ell_1$.  If $(x_n)_{n=1}^{\infty}$ is weakly Cauchy we may pass to convex blocks $(y_n)_{n=1}^{\infty}$ verifying \eqref{u2}.  But then $\sum(y_n-y_{n-1})$ is a WUC series and $x_n-y_n$ is weakly null.
\end{proof}

In \cite{HaydonOdellRosenthal1991} Haydon, Odell and Rosenthal introduced the class of Baire-1/2 functions: if $\Omega$ is a compact metric space then a bounded function $f$ on $\Omega$ is Baire-1/2 if for every $\epsilon>0$ there exist bounded lower-semi-continuous functions $\varphi,\psi$ such that $|f(s)-(\varphi(s)-\psi(s))|<\epsilon$ for $s\in \Omega.$

Suppose $X$ is a separable Banach space and $x^{**}\in X^{**}\setminus X$. We can generate a sequence $\chi_n=\chi_n(x^{**})\in X^{(2n)}$ by $\chi_1=x^{**}$ and then $\chi_n=j_{n-1}^{**}x^{**}$ where $j_{n-1}$ is the canonical embedding $X\subset X^{**}\subset\cdots\subset X^{2(n-1)}.$ The sequence $(\chi_n)_{n=1}^{\infty}$ is considered in the transfinite dual $X^{\omega}$ defined as the completion of $\cup_{n\ge 1}X^{(2n)}.$

The following theorem follows easily from \cite{HaydonOdellRosenthal1991} and \cite{Farmaki1993}:

\begin{Thm}\label{WABS}  If $X$ is a separable Banach space then the following are equivalent:
\newline
(i) $X$ has the (WABS) property.\newline
(ii) Every $x^{**}\in X^{**}$ is Baire-1/2 as a function on $B_{X^*}$ with the weak$^*$-topology.\newline
(iii) There is no $x^{**}\in X^{**}\setminus X$ so that $(\chi_n(x^{**}))_{n=1}^{\infty}$ is equivalent to the unit vector basis of $\ell_1.$
\end{Thm}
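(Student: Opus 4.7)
The plan is to derive all three equivalences as formal consequences of the Haydon--Odell--Rosenthal dichotomy for Baire-$\tfrac12$ functions \cite{HaydonOdellRosenthal1991}, together with Farmaki's transfer of that dichotomy into spreading-model / convex-block language \cite{Farmaki1993}. I would first dispose of (ii) $\Leftrightarrow$ (iii), which is the purely functional statement, and then link (i) to (iii) through a convex-block weak$^*$-limit argument.

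For (ii) $\Leftrightarrow$ (iii): elements of $X$ are weak$^*$-continuous on $B_{X^*}$, hence trivially Baire-$\tfrac12$, so only $x^{**}\in X^{**}\setminus X$ need be considered. The main theorem of \cite{HaydonOdellRosenthal1991} classifies non-Baire-$\tfrac12$ elements in $X^{**}$ precisely through their iterated images in the transfinite biduals: $x^{**}$ is Baire-$\tfrac12$ on $(B_{X^*},w^*)$ if and only if the sequence $(\chi_n(x^{**}))_{n=1}^\infty$ is not equivalent to the unit vector basis of $\ell_1$ in $X^\omega$. This yields (ii) $\Leftrightarrow$ (iii) directly.

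For (i) $\Leftrightarrow$ (iii), I would argue both directions by contrapositive. If $x^{**}\in X^{**}\setminus X$ produces an $\ell_1$-equivalent $(\chi_n(x^{**}))$, then by Goldstine's theorem together with the construction in \cite{Farmaki1993} one realises this iterated sequence as a diagonal of weak$^*$-limits of a bounded sequence $(x_n)\subset X$ weak$^*$-converging to $x^{**}$, with the further property that every convex block $(y_n)$ satisfies $\|\tfrac1n\sum_{j=1}^n(-1)^j y_{r_j}\|\ge c>0$ uniformly in $r_1<\cdots<r_n$; so (WABS) fails. Conversely, if (WABS) fails, choose a witnessing bounded sequence $(x_n)$. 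Since no convex block subsequence satisfies (ABS), the spreading-model reformulation of (ABS) \cite{Beauzamy1979} forces every convex block of $(x_n)$ to have every spreading model equivalent to the $\ell_1$-basis. Passing to a weak$^*$-cluster point $x^{**}$ of $(x_n)$ (and replacing $x_n$ by $x_n-x$ for a suitable $x\in X$ if $x^{**}$ happens to lie in $X$, which one can arrange since $(x_n)$ cannot be norm convergent under our hypothesis), Farmaki's identification then converts the $\ell_1$-spreading-model behaviour of the convex blocks into equivalence of $(\chi_n(x^{**}))$ with the $\ell_1$-basis in the transfinite dual $X^\omega$, so (iii) fails.

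The only genuine difficulty is the bookkeeping at the last step: one must transfer, inside the successive biduals $X^{(2n)}$, the uniform failure of (ABS) on convex block subsequences weak$^*$-converging to $x^{**}$ into $\ell_1$-equivalence of the canonical iterated sequence $(\chi_n(x^{**}))$ in $X^\omega$. This is exactly the content of the relevant results of \cite{Farmaki1993} (building on \cite{HaydonOdellRosenthal1991}), so once invoked the rest of the proof is formal.
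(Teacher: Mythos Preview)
Your plan is essentially the paper's: both derive all three equivalences from \cite{HaydonOdellRosenthal1991} and \cite{Farmaki1993}. The paper organizes the argument with (ii) as the hub---proving (i) $\Rightarrow$ (ii) via Theorem~B of \cite{HaydonOdellRosenthal1991}, (ii) $\Leftrightarrow$ (iii) via Theorem~11 of \cite{Farmaki1993}, and (ii) $\Rightarrow$ (i) via Theorem~3.7 and Lemma~3.8 of \cite{HaydonOdellRosenthal1991} combined with Mazur's theorem---whereas you try to link (i) and (iii) directly. But your direct route is really the paper's two steps composed, so the difference is cosmetic. One correction of attribution: the equivalence (ii) $\Leftrightarrow$ (iii) is Farmaki's Theorem~11, not a theorem of \cite{HaydonOdellRosenthal1991}.

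There is, however, a genuine slip in your $\neg$(i) $\Rightarrow$ $\neg$(iii). When the weak$^*$-cluster point $x^{**}$ of your witnessing sequence $(x_n)$ lies in $X$, replacing $x_n$ by $x_n-x$ yields a weakly null sequence whose only weak$^*$-limit is $0\in X$; the subtraction does not manufacture an element of $X^{**}\setminus X$, and ``$(x_n)$ is not norm convergent'' is irrelevant to this. The correct resolution is that this case cannot occur: a subsequence weakly convergent to $x$ has, by Mazur, convex blocks norm-converging to $x$, and those trivially satisfy the alternating Ces\`aro condition, contradicting failure of (WABS). You must also dispose separately of the case where $(x_n)$ has a subsequence equivalent to the $\ell_1$-basis (so Rosenthal's dichotomy does not hand you a weakly Cauchy subsequence); there (iii) fails directly by Proposition~6 of \cite{Farmaki1993}, which shows that (iii) forces $X$ to contain no copy of $\ell_1$. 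Once these two cases are handled, your remaining appeal to ``Farmaki's identification'' is really the composite of the Haydon--Odell--Rosenthal characterization of Baire-$1/2$ with Farmaki's Theorem~11, exactly as in the paper.
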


\begin{proof} (i) $\Rightarrow$ (ii).  Since $X$ contains no copy of $\ell_1$, every $x^{**}\in X^{**}\setminus X$ is the weak$^*$-limit of a sequence $(x_n)_{n=1}^{\infty}$ \cite{OdellRosenthal1975}.  We pass to a sequence of convex blocks $(y_n)_{n=1}^{\infty}$ so that \eqref{ABS} holds.  Now apply Theorem B of \cite{HaydonOdellRosenthal1991} to deduce that $x^{**}$ is Baire-1/2.

(ii) $\iff$ (iii).  This is Theorem 11 of  Farmaki \cite{Farmaki1993} (since (iii) also implies that $X$ contains no copy of $\ell_1$ by Proposition 6 of \cite{Farmaki1993}).

(ii) $\Rightarrow$ (i).  Let $(x_n)_{n=1}^{\infty}$ be a bounded sequence in $X$.  If $(x_n)_{n=1}^{\infty}$ has a weakly convergent subsequence then Mazur's theorem quickly yields a sequence of convex blocks satisfying \eqref{ABS}.  By Rosenthal's theorem \cite{Rosenthal1974} we may therefore pass to the case when $(x_n)_{n=1}^{\infty}$ is weakly Cauchy and converging weak$^*$ to some $x^{**}\in X^{**}\setminus X.$   By Theorem 3.7 and Lemma 3.8 of \cite{HaydonOdellRosenthal1991} there is a bounded sequence $(f_n)_{n=1}^{\infty}$ in $\C(B_{X^*})$ converging pointwise to $x^{**}$ so that $(f_n)_{n=1}^{\infty}$ satisfies \eqref{ABS}.  By Mazur's theorem, we may find a sequence of convex blocks $(y_n)_{n=1}^{\infty}$ of $(x_n)_{n=1}^{\infty}$ and a sequence of convex blocks  $(g_n)_{n=1}^{\infty}$ of $(f_n)_{n=1}^{\infty}$ such that $\|y_n-g_n\|<2^{-n}$ (considering $X$ as a subspace of $\C(B_{X^*})).$  Then $(y_n)_{n=1}^{\infty}$ satisfies \eqref{ABS}.
\end{proof}

We next give a very similar argument to Lemma \ref{weaklynull} for the case when $(x_n)_{n=1}^{\infty}$ converges weak$^*$ to some $x^{**}\in X^{**}\setminus X.$

\begin{Lem}\label{auskipped}  Let $X$ be a separable Banach space with property $(au)$,
 and suppose that $(x_n)_{n=1}^{\infty}$ is a weakly Cauchy sequence in $X$ converging weak$^*$ to some $x^{**}\in X^{**}\setminus X.$  Then there is a subsequence $(y_n)_{n=1}^{\infty}$ of $(x_n)_{n=1}^{\infty}$ such that the sequence $(y_n-y_{n-1})_{n=1}^{\infty}$ (where $y_0=0$) is an asymptotically skipped 1-unconditional basic sequence.\end{Lem}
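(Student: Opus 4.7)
The plan is to mirror the proof of Lemma \ref{weaklynull}, with the following adaptations. The ``weakly null'' hypothesis there is replaced by ``weakly Cauchy with weak$^*$ limit $x^{**}$,'' so that for any finite-codimensional subspace $F\subset X$, the sequence $(Qx_n)$ in the finite-dimensional quotient $X/F$ converges in norm to a limit $\ell=Q^{**}x^{**}$. The basic sequence we construct is $(u_n):=(y_n-y_{n-1})$ (with $y_0=0$), and the role of ``skipping'' is precisely to force each non-initial block $z_j$ in $u$-coordinates to have a $y$-representation $z_j=\sum_l c_l^{(j)}y_l$ with $\sum_l c_l^{(j)}=0$; this telescoping identity is the mechanism by which weak Cauchy convergence (rather than weak null convergence) can be exploited.

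Step 1 (preparation and construction). Since $x^{**}\notin X$, $(x_n)$ is not norm-Cauchy, so after an extraction we may assume $\|x_n-x_{n-1}\|\ge c>0$; by Bessaga--Pe\l czy\'nski we may further assume both $(x_n)$ and $(x_n-x_{n-1})_{n\ge 2}$ are basic with basis constant at most $K$. Fix a decreasing positive $(\eta_n)$ with $\sum\eta_n<\infty$. Construct $y_n=x_{p_n}$ and finite-codim $F_n\subset X$ inductively: at step $n$, apply Lemma \ref{principle}(i) with $E=[y_1,\dots,y_{n-1}]$ and tolerance $\eta_n/4$ to obtain $F_n$; the sequence $(Q_nx_m)_m$ is weakly Cauchy in the finite-dimensional $X/F_n$, hence norm-convergent to some $\ell_n$; then pick $p_n>p_{n-1}$ large enough that $\|Q_jx_{p_n}-\ell_j\|\le c\eta_j2^{j-n-1}/L$ for all $j\le n$, where $L$ is an absolute constant chosen to absorb basis constants, and also so that $(u_n)$ remains basic (Mazur-style selection applied to the weakly null $u_n$).

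Step 2 (main inequality and backward induction). Given $\delta>0$, pick $N$ with $\prod_{m>N}(1+\eta_m)<1+\delta$, fix $x\in[u_1,\dots,u_N]\setminus\{0\}$, and consider any admissible skipped blocks $z_1,\dots,z_n$ of $\{x,u_{N+1},u_{N+2},\dots\}$ with signs $\epsilon_j$. For $k\ge 2$ let $M_k=N+m_{k-1}-1$ (the $y$-index of the skipped basis vector $u_{M_k}$); the skip forces $z_1,\dots,z_{k-1}\in[y_1,\dots,y_{M_k-1}]$ while $z_k,z_{k+1},\dots$ are supported on $y$-indices $\ge M_k$. Without loss of generality $\epsilon_1=+1$ (else globally negate $\epsilon$). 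Prove by backward induction on $k$ from $n$ down to $2$ that
\[
\|\textstyle\sum_j\epsilon_jz_j\|\le\prod_{j\ge k}(1+\eta_{M_j})\,\|\textstyle\sum_jz_j\|
\]
whenever $\epsilon_j=1$ for $j<k$. Each inductive step reduces to
\[
\|w-u\|\le(1+\eta_{M_k})\|w+u\|,\qquad w=\textstyle\sum_{j<k}z_j,\ \ u=z_k-\textstyle\sum_{j>k}\epsilon_jz_j.
\]
Since each $z_j$ for $j\ge k\ge 2$ has sum-zero $y$-coefficients, so does $u$, whence
\[
Q_{M_k}u=\textstyle\sum_l c_l(Q_{M_k}y_l-\ell_{M_k}).
\]
The geometric decay $\|Q_{M_k}y_l-\ell_{M_k}\|\le(c\eta_{M_k}/L)2^{M_k-l-1}$ (for $l\ge M_k$) combined with the basic-sequence bound on $|c_l|$ yields $\|Q_{M_k}u\|=O(\eta_{M_k}\|u\|)$; standard approximation of $u$ by some $u'\in F_{M_k}$ followed by Lemma \ref{principle}(i) applied to $w\in E=[y_1,\dots,y_{M_k-1}]$ and $u'\in F_{M_k}$ closes the argument.

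The main obstacle is the bookkeeping of constants. Normalizing $\|\textstyle\sum_jz_j\|=1$, the inductive hypothesis at stage $k+1$ bounds $\|w+u\|\le\prod_{j>k}(1+\eta_{M_j})$ and hence $\|u\|$; but the accumulating factors---basic-sequence constants, block-basis constants, and the $(1+\eta_{M_k}/4)$ multipliers from Lemma \ref{principle}(i)---must all be absorbed into the single target factor $(1+\eta_{M_k})$. This is handled by taking $L$ sufficiently large in the design step (and, if necessary, rescaling $(\eta_n)$). Once the key inequality is established the conclusion is immediate, since $\prod_{j\ge 2}(1+\eta_{M_j})\le\prod_{m>N}(1+\eta_m)<1+\delta$, the $M_j$ being strictly increasing integers all exceeding $N$.
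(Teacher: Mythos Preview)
Your proposal is correct and follows essentially the same strategy as the paper: choose the subsequence so that $Q_jy_n$ is close to $Q_j^{**}x^{**}$ in the finite-dimensional quotient, then use that a skipped tail block written in $y$-coordinates has coefficient-sum zero so that $Q_{M_k}$ of the tail is small and the $(au)$ estimate applies; the only cosmetic difference is that the paper runs a forward induction on the number of blocks (merging $v_1$ and $v_2$ at each step and applying the analogue of \eqref{zerosum} at the first skip) while you run the backward induction of Lemma~\ref{weaklynull}. One small correction to your plan paragraph: the sum-zero property of any non-initial block is \emph{automatic} from the telescoping $u_i=y_i-y_{i-1}$ and does not require skipping; what the skip actually buys---and what you do correctly invoke in Step~2---is the separation of $y$-supports, namely $w\in[y_l]_{l<M_k}$ while $u$ is supported on $[y_l]_{l\ge M_k}$, and this is what makes the $(au)$ inequality from Lemma~\ref{principle}(i) applicable.
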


\begin{proof}  We may suppose, by passing to a subsequence, that $(x_n)_{n=1}^{\infty}$ is basic (see e.g. \cite{AlbiacKalton2006} Theorem 1.5.6), and that if $x^*\in X^*$ is such that $x^{**}(x^*)=1$ then $|x^*(x_n)-1|<2^{-n}$.  This implies the existence of $y^*\in X^*$ with $y^*(x_n)=1$ for all $n$ and so $(x_n-x_{n-1})_{n=1}^{\infty}$ (with $x_0=0$) is  also a basic sequence (see \cite{Singer1970} pp. 308-311); note this remark applies to all subsequences of $(x_n)_{n=1}^{\infty}.$  Let $K$ be the basis constant for the sequence $(x_n)_{n=1}^{\infty}$ and assume that $0<c\le \|x_k\|\le C<\infty$ for all $k.$

Let $(\delta_n)_{n=1}^{\infty}$ be a decreasing sequence of positive numbers with the property that $\sum_{n=1}^{\infty}\delta_n<\infty.$  We will construct a subsequence $(y_n)_{n=1}^{\infty}$ and a sequence $(F_n)_{n=1}^\infty$ of closed finite-codimensional subspaces inductively.

Let $y_1=x_1$ and $F_1=X.$  If $y_1,\ldots,y_{n-1}$ and $F_1,\ldots,F_{n-1}$ have been chosen then we may choose a closed subspace $F_n$ of finite codimension so that if $w\in [y_j]_{j=1}^{n-1}$ and $z\in F_n$ then
$$ \|w-z\|\le (1+\tfrac14\delta_{n})\|w+z\|.$$
Let $Q_j:X\to X/F_j$ denote the quotient map for $1\le j\le n.$  If $y_{n-1}=x_{m_n}$ we may pick $y_n=x_{m_{n+1}}$ with $m_{n+1}>m_n$ so that $$\|Q_jy_n-Q_j^{**}x^{**}\|\le \frac{2^{j-n-1}c\delta_j}{10K}, \qquad 1\le j\le n.$$

Now suppose $w=\sum_{j=1}^{n-1}a_jy_j$ and $z=\sum_{j=n}^{N}a_jy_j$ where $\|w+z\|=1$ and $\sum_{j=n}^Na_j=0.$
Then we have
$$\|Q_nz\|= \|\sum_{j=n}^{N}a_j(Q_ny_j-Q_n^{**}x^{**})\| \le 2Kc^{-1} \sum_{j=n}^{\infty} \|Q_ny_j-Q_n^{**}x^{**}\|\le \delta_n/5.$$

Hence there exists $z'\in F_n$ such that $\|z-z'\|\le \delta_n/4$ and thus
$$ \|w-z\|\le \|w-z'\|+\tfrac14\delta_n\le  (1+\tfrac14\delta_n)\|w+z'\|+\tfrac14\delta_n\le 1+\delta.$$
Thus we have the inequality
\begin{equation}\label{zerosum} \|\sum_{j=1}^{n-1}a_jy_j-\sum_{j=n}^Na_jy_j\|\le (1+\delta_n)\|\sum_{j=1}^Na_jy_j\|,\qquad \text{if } \sum_{j=n}^Na_j=0.\end{equation}

Now let $z_n=y_n-y_{n-1}$ and suppose $v_j=\sum_{m_{j-1}+1}^{m_j-1}a_jz_j$ for $1\le j\le n$ where $m_0=0<m_1<\cdots<m_n$ with $m_j-m_{j-1}\ge 2$ for $j\ge 2.$
Then we claim that if $\epsilon_j=\pm 1$ we have
\begin{equation}\label{product} \|\sum_{j=1}^n\epsilon_jv_j\| \le \prod_{j=1}^n(1+\delta_{m_j})\|\sum_{j=1}^nv_j\|.\end{equation}

This is proved by induction on $n\ge 2.$ For $n=2$ it follows from \eqref{zerosum}.  Assume it is proved for $n-1.$  Then
\begin{align*} \|\sum_{j=1}^n\epsilon_jv_j\|&\le (1+\delta_{m_1})\|v_1+v_2+\sum_{j=3}^n\epsilon_2\epsilon_jv_j\|\\
&\le \prod_{j=1}^n(1+\delta_{m_j})\|\sum_{j=1}^nv_j\|.\end{align*}

Hence $(y_j-y_{j-1})_{j=1}^{\infty}$ is asymptotically skipped 1-unconditional.\end{proof}

\begin{Thm}\label{WABS2}
Let $X$ be a separable Banach space.  Then the following are equivalent:
\newline (i) $X$ has properties $(au)$ and (WABS),\newline  (ii) For any $\delta>0$ there is a Banach space $Y$ with a shrinking 1-unconditional basis and a subspace $X_{\delta}$ of $Y$ such that $d(X,X_{\delta})<1+\delta.$\end{Thm}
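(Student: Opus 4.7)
\medskip\noindent\textbf{Plan.} For (ii) $\Rightarrow$ (i), note that by Theorem \ref{main} condition (ii) already yields property $(au^*)$, which in turn gives $(au)$ by Proposition \ref{*-auau}(a). To obtain (WABS): $(au^*)$ forces $X^*$ to be separable (Proposition \ref{separabledual}), so $X$ contains no copy of $\ell_1$; since $X$ embeds in a space with a $1$-unconditional basis, it inherits Pe\l czy\'nski's property (u), which is hereditary and trivially present in spaces with an unconditional basis. Proposition \ref{u} then produces, for every bounded sequence in $X$, a convex block sequence satisfying the uniform bound \eqref{u2}; dividing by $n$ yields \eqref{ABS}, so (WABS) holds.

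\medskip\noindent For (i) $\Rightarrow$ (ii), Theorem \ref{main} reduces the task to showing $X$ has $(au^*)$. Assume for contradiction that $X$ has $(au)$ and (WABS) but fails $(au^*)$. Since (WABS) rules out $\ell_1$-copies in $X$ (no convex block of the $\ell_1$-basis can satisfy \eqref{ABS}), Proposition \ref{*-auau}(b) forces $X$ to be non-reflexive. Choose witnesses of the $(au^*)$ failure: $x^{*}\in X^{*}$ and a weak$^*$-null sequence $(u_n^{*})\subset X^{*}$ with $\|x^{*}+u_n^{*}\|\to a<b=\|x^{*}-u_n^{*}\|$, and pick $x_n \in B_X$ with $(x^{*}-u_n^{*})(x_n)\to b$. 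Using Rosenthal's $\ell_1$-theorem together with non-reflexivity (perturbing the witnesses by a weakly Cauchy non-convergent sequence if necessary), extract a weakly Cauchy subsequence of $(x_n)$ whose weak$^*$ limit $x^{**}$ lies in $X^{**}\setminus X$.

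\medskip\noindent Now apply Lemma \ref{auskipped} to obtain a further subsequence $(y_n)$ whose jumps $e_k=y_k-y_{k-1}$ form an asymptotically skipped $1$-unconditional basic sequence, and set $Z=\overline{\mathrm{span}}\{e_k\}$. Then $y_n \in Z$ and $\|y_n\|_Z=\|y_n\|_X\le 1$. After relabelling the matched $u_n^{*}$'s, $(u_n^{*})|_Z$ is weak$^*$-null in $Z^{*}$ and $(x^{*}-u_n^{*})(y_n)\to b$, so $\|(x^{*}-u_n^{*})|_Z\|_{Z^{*}}\to b$, while $\|(x^{*}+u_n^{*})|_Z\|_{Z^{*}}\le \|x^{*}+u_n^{*}\|_{X^{*}}\to a<b$. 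Thus $Z$ also fails $(au^*)$, and Proposition \ref{skippedau*}(ii) forces every spreading model of $(e_k)$ in $Z$ to be equivalent to the $\ell_1$-basis.

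\medskip\noindent The principal technical obstacle is the final step: translating this $\ell_1$-spreading-model behaviour of $(e_k)$ into $\ell_1$-equivalence of the transfinite iterates $(\chi_n(x^{**}))$ in $X^{\omega}$, which would contradict Theorem \ref{WABS}(iii) (hence (WABS)) and close the argument. The key point is that, for the weakly Cauchy $(y_n)\to x^{**}$, the norms $\|\sum_i a_i\chi_i(x^{**})\|_{X^{\omega}}$ are computed by iterated weak$^*$ limits of $\|\sum_i a_i y_{n_i}\|$, sending the indices $n_1<\cdots<n_k$ to infinity one at a time; telescoping $y_n=\sum_{k\le n}e_k$ expresses these limits in terms of block patterns in $(e_k)$, whose norms are governed by the spreading model. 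A careful bookkeeping of the iterated limits (using the skipped asymptotic unconditionality of $(e_k)$ to absorb cross-terms) then shows that an $\ell_1$ spreading model of $(e_k)$ forces $\|\sum_i a_i\chi_i(x^{**})\|_{X^{\omega}} \gtrsim \sum_i |a_i|$, yielding the desired $\ell_1$-equivalence and the sought contradiction.
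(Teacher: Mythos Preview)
Your treatment of (ii) $\Rightarrow$ (i) is fine and matches the paper's. The problems are in (i) $\Rightarrow$ (ii).

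\textbf{The weakly convergent case.} The sentence ``perturbing the witnesses by a weakly Cauchy non-convergent sequence if necessary'' does not work: adding such a sequence to the norming vectors $x_n$ destroys both $\|x_n\|\le 1$ and $(x^*-u_n^*)(x_n)\to b$, so the perturbed sequence no longer witnesses the failure of $(au^*)$. The paper disposes of this case directly: if $x_n\to x$ weakly then $x-x_n$ is weakly null, and $(au)$ gives $\lim\|2x-x_n\|=\lim\|x_n\|=1$, while $(x^*+u_n^*)(2x-x_n)\to b>a$, a contradiction.

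\textbf{The principal gap.} Your final step --- passing from ``every spreading model of $(e_k)$ is $\ell_1$'' to ``$(\chi_n(x^{**}))$ is $\ell_1$-equivalent'' --- is not justified, and I do not see how to fill it. The Farmaki/Haydon--Odell--Rosenthal characterization says $x^{**}$ fails to be Baire-1/2 precisely when \emph{every} approach to $x^{**}$ is bad; exhibiting \emph{one} sequence $(y_n)\to x^{**}$ whose differences have $\ell_1$ spreading models says nothing about other sequences, and in particular does not preclude the existence of a convex block of $(y_n)$ satisfying \eqref{ABS} (which is all (WABS) provides). The difficulty is structural: the witnessing relation ``$x_n$ almost norms $x^*-u_n^*$'' ties each $x_n$ to a \emph{specific} $u_n^*$, and this pairing is destroyed by convex blocking, so you cannot simply pass to the convex block guaranteed by (WABS) and rerun the argument.

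\textbf{How the paper proceeds instead.} The paper decouples the two uses of the hypotheses. First, for an \emph{arbitrary} $x^{**}\in X^{**}\setminus X$ (not tied to any $(au^*)$-witnesses), it applies (WABS) \emph{before} Lemma~\ref{auskipped} to obtain $(x_n)\to x^{**}$ already satisfying \eqref{ABS}; then the differences cannot have an $\ell_1$ spreading model, and Proposition~\ref{skippedau*}(ii) forces the span to have $(au^*)$, hence property~(u). This shows $X$ itself has property~(u). Only then does the paper pick $(au^*)$-witnesses; property~(u) supplies a $c_0$-summing sequence converging to the same $x^{**}$, Sobczyk's theorem gives a projection onto its span, and the resulting decomposition embeds $E=[x_n]$ into a space with a shrinking unconditional basis. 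This makes the \emph{biorthogonal} sequence $(z_n^*)$ have an unconditional subsequence, and now Proposition~\ref{skippedau*}(i) (not (ii)) yields $(au^*)$ on $Z$, contradicting the restricted witnesses. The point is that property~(u), once established globally, is stable under the manipulations needed to keep the witnessing vectors intact; your one-step route loses exactly this stability.
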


\begin{proof}  Of course by Theorem \ref{main} (ii) is equivalent to the fact that $X$ has $(au^*).$

 (ii) $\Rightarrow$ (i).  We observe that (ii) implies $X$ has property (u) and hence (WABS).  Property $(au)$ follows trivially from (ii).

(i) $\Rightarrow$ (ii). Clearly $X$ contains no copy of $\ell_1.$  Suppose $x^{**}\in X^{**}\setminus X$; by the Odell-Rosenthal theorem \cite{OdellRosenthal1975} and property (WABS) there is a sequence $(x_n)_{n=1}^{\infty}$ converging weak$^*$ to $x^{**}$ with the property that
$$ \lim_{n\to\infty}\sup_{r_1<r_2<\cdots<r_n}\Big\|\frac1n\left(\sum_{k=1}^n(-1)^kx_{r_k}\right)\Big\|=0.$$
According to Lemma \ref{auskipped}, by passing to a subsequence we can assume that $(x_n-x_{n-1})_{n=1}^{\infty}$ is asymptotically skipped 1-unconditional.  But then no spreading model of $(x_n-x_{n-1})_{n=1}^{\infty}$ (with $x_0=0$) is equivalent to the $\ell_1-$basis.  Thus, by Proposition \ref{skippedau*} we have that the space $E=[x_n-x_{n-1}]_{n=1}^{\infty}$ has property $(au^*).$  In particular by Theorem \ref{main} $E$ has property (u). Since $x^{**}$ is in the weak$^*$-closure of $E$ we conclude that $X$ has property (u).

We next show that $X$ has property $(au^*)$.  Suppose not.  Then there exists $x^*\in X^*$ and a weak$^*$-null sequence $(x^*_n)_{n=1}^{\infty}$ such that $\|x^*+x_n^*\|\le 1$ and $\|x^*-x_n^*\|> 1+\delta$ for some $\delta>0.$  Pick $x_n\in X$ so that $\|x_n\|=1$ but $x^*(x_n)-x_n^*(x_n)>1+\delta.$  If $(x_n)_{n=1}^{\infty}$ is weakly convergent to some $x$ then we obtain a contradiction since
$$ \lim_{n\to\infty}x^*(2x-x_n)+x_n^*(2x-x_n)=\lim_{n\to\infty}x^*(x_n)-x_n^*(x_n)>1+\delta$$ but
$$ \lim_{n\to\infty}\|2x-x_n\|=1.$$
Thus we can assume, passing to a subsequence, that  $(x_n)_{n=1}^{\infty}$ is a basic sequence which converges weak$^*$ to some $x^{**}\in X^{**}\setminus X.$   Since $X$ has property (u) there is sequence $(y_n)_{n=1}^{\infty}$ in $X$ so that $(y_n)_{n=1}^{\infty}$ also converges weak$^*$ to $x^{**}$ and is equivalent to the summing basis of $c_0.$  Let $G=[y_n]_{n=1}^{\infty}.$  By Sobczyk's theorem (see \cite{Sobczyk1941} or e.g. \cite{AlbiacKalton2006} Theorem 2.5.8) there is a projection $P:X\to G.$  Then $(P^{**}x_n)_{n=1}^{\infty}$ converges weak$^*$ to $x^{**}$ and so if $Q=I-P$ the sequence $(Qx_n)_{n=1}^{\infty}$ is weakly null.

Now, by Lemma \ref{weaklynull}, passing to a further subsequence of $(x_n)_{n=1}^{\infty}$ we can suppose that either (a) $\|Qx_n\|<2^{-n}$ or (b) $(Qx_n)_{n=1}^{\infty}$ is an unconditional basis for its closed linear span $Z.$ We may also suppose that $z_n=x_n-x_{n-1}$ (where $x_0=0$) defines an asymptotically skipped 1-unconditional basis of $Z.$
In case (a) the space $E=[x_n]_{n=1}^{\infty}$ is isomorphic to a subspace of $c_0.$  In case (b) $E$ is isomorphic to a subspace of $Z\oplus G.$  In either case $E$ embeds (isomorphically, not isometrically) into a space with a shrinking unconditional basis.  In particular the biorthogonal sequence $(z_n^*)_{n=1}^{\infty}$ in $Z^*$ (which is weak$^*$-null) has  a subsequence which is an unconditional basic sequence (again by Lemma \ref{weaklynull}).
By Proposition \ref{skippedau*} this means that $Z$ has property $(au^*)$.  Now
$ \|(x^*+x_n^*)|_Z\|\le 1$ and so $\limsup_{n\to\infty}\|(x^*-x_n^*)|_Z\|\le 1.$  However
$(x^*-x_n^*)(x_n)>1+\delta$ and we have a contradiction.\end{proof}

{\bf Remark.} We do not know whether it is possible to replace the (WABS)-condition in (i) by the assumption that $X$ contains no copy of $\ell_1$ (or even that $X^*$ is separable).  This problem reduces to the question of whether one can find a space $Y$ with an asymptotically skipped 1-unconditional basis, which contains no copy of $\ell_1$ but does not have property $(au^*).$  If one further imposes the condition that $Y$ contains no copy of $c_0$ then $Y$ would be quasi-reflexive of order one by Proposition \ref{skipped2}.  It is certainly possible to find such quasi-reflexive spaces which fail the (WABS) property; this is  the requirement that the transfinite dual $Y^{\omega}\approx Y\oplus \ell_1$.  Examples have been given by Bellenot \cite{Bellenot1982} and by Haydon, Odell and Rosenthal \cite{HaydonOdellRosenthal1991}.  However it seems difficult to impose the extra condition that $Y$ has an asymptotically skipped 1-unconditional basis and therefore leads us to speculate that Theorem \ref{WABS2} can be improved.

Note that the James space \cite{James1951a} (or see \cite{AlbiacKalton2006} p.62) is quasi-reflexive and does have (WABS).  It therefore fails $(au^*)$ (it does not even have property (u)).  By Theorem \ref{WABS2} the James space  cannot have $(au)$ under any equivalent norming.  However it does have the (UTP) of Johnson and Zheng \cite{JohnsonZheng2008}.

{\bf Remark.}  The (WABS)-condition also appears implicitly in \cite{Kalton2007a} where Theorem 4.5 could be rephrased as saying that a separable Banach space with the (WABS) property and the $\mathcal Q-$property is reflexive; this implies that if $X$ is a space with the (WABS) property such that $X$ coarsely embeds into a reflexive space or  $B_X$ uniformly embeds into a reflexive space then $X$ is reflexive.  There is a clear link with the problems considered here.  For example if $X$ is a separable Banach space with an unconditional basis containing no copy of $c_0$ then  $B_X$ uniformly embeds in a reflexive space (Theorem 3.8 of \cite{Kalton2007a}).

\begin{bibsection}
\begin{biblist}

\bib{AlbiacKalton2006}{book}{
  author={Albiac, F.},
  author={Kalton, N. J.},
  title={Topics in Banach space theory},
  series={Graduate Texts in Mathematics},
  volume={233},
  publisher={Springer},
  place={New York},
  date={2006},
  pages={xii+373},
}

\bib{Beauzamy1979}{article}{
  author={Beauzamy, B.},
  title={Banach-Saks properties and spreading models},
  journal={Math. Scand.},
  volume={44},
  date={1979},
  pages={357--384},
}

\bib{Bellenot1982}{article}{
  author={Bellenot, S. F.},
  title={Transfinite duals of quasireflexive Banach spaces},
  journal={Trans. Amer. Math. Soc.},
  volume={273},
  date={1982},
  pages={551--577},
}

\bib{CasazzaKalton1990}{article}{
  author={Casazza, P. G.},
  author={Kalton, N. J.},
  title={Notes on approximation properties in separable Banach spaces},
  booktitle={Geometry of Banach spaces (Strobl, 1989), London Math. Soc. Lecture Note Ser. vol. 158},
  pages={49\ndash 63},
  date={1990},
}

\bib{Farmaki1993}{article}{
  author={Farmaki, V.},
  title={Characterizations of elements of a double dual Banach space and their canonical reproductions},
  journal={Studia Math.},
  volume={104},
  date={1993},
  pages={61--74},
}

\bib{GodefroyKalton1997}{article}{
  author={Godefroy, G.},
  author={Kalton, N. J.},
  title={Approximating sequences and bidual projections},
  journal={Quart. J. Math. Oxford Ser. (2)},
  volume={48},
  date={1997},
  pages={179\ndash 202},
}

\bib{Grivaux2003}{article}{
  author={Grivaux, S.},
  title={Construction of operators with prescribed behaviour},
  journal={Arch. Math. (Basel)},
  volume={81},
  date={2003},
  pages={291--299},
}

\bib{HaydonOdellRosenthal1991}{article}{
  author={Haydon, R.},
  author={Odell, E.},
  author={Rosenthal, H.},
  title={On certain classes of Baire-$1$ functions with applications to Banach space theory},
  conference={ title={Functional analysis}, address={Austin, TX}, date={1987/1989}, },
  book={ series={Lecture Notes in Math.}, volume={1470}, publisher={Springer}, place={Berlin}, },
  date={1991},
  pages={1--35},
}

\bib{James1951a}{article}{
  author={James, R. C.},
  title={A non-reflexive Banach space isometric with its second conjugate space},
  journal={Proc. Nat. Acad. Sci. U. S. A.},
  volume={37},
  date={1951},
  pages={174\ndash 177},
}

\bib{JohnsonZheng2008}{article}{
  author={Johnson, W. B.},
  author={Zheng, B.},
  title={A characterization of subspaces and quotients of reflexive Banach spaces with unconditional bases},
  journal={Duke Math. J.},
  volume={141},
  date={2008},
  pages={505--518},
}

\bib{JohnsonZhenginprep}{article}{
  author={Johnson, W. B.},
  author={Zheng, B.},
  pages={in preparation},
}

\bib{Kalton1974}{article}{
  author={Kalton, N. J.},
  title={Spaces of compact operators},
  journal={Math. Ann.},
  volume={208},
  date={1974},
  pages={267\ndash 278},
}

\bib{Kalton2001}{article}{
  author={Kalton, N. J.},
  title={On subspaces of $c\sb 0$ and extension of operators into $C(K)$-spaces},
  journal={Q. J. Math.},
  volume={52},
  date={2001},
  pages={313\ndash 328},
}

\bib{Kalton2007a}{article}{
  author={Kalton, N. J.},
  title={Coarse and uniform embeddings into reflexive spaces},
  journal={Q. J. Math.},
  volume={58},
  date={2007},
  pages={393--414},
}

\bib{KaltonWerner1995}{article}{
  author={Kalton, N. J.},
  author={Werner, D.},
  title={Property $(M)$, $M$-ideals, and almost isometric structure of Banach spaces},
  journal={J. Reine Angew. Math.},
  volume={461},
  date={1995},
  pages={137\ndash 178},
}

\bib{Lima1995}{article}{
  author={Lima, {\AA }},
  title={Property $(wM\sp \ast )$ and the unconditional metric compact approximation property},
  journal={Studia Math.},
  volume={113},
  date={1995},
  pages={249--263},
}

\bib{LindenstraussTzafriri1977}{book}{
  author={Lindenstrauss, J.},
  author={Tzafriri, L.},
  title={Classical Banach spaces, I, Sequence spaces},
  publisher={Springer-Verlag},
  place={Berlin},
  date={1977},
}

\bib{LindenstraussTzafriri1979}{book}{
  author={Lindenstrauss, J.},
  author={Tzafriri, L.},
  title={Classical Banach spaces, II, Function spaces},
  volume={97},
  publisher={Springer-Verlag},
  place={Berlin},
  date={1979},
}

\bib{OdellRosenthal1975}{article}{
  author={Odell, E.},
  author={Rosenthal, H. P.},
  title={A double-dual characterization of separable Banach spaces containing $l\sp {1}$},
  journal={Israel J. Math.},
  volume={20},
  date={1975},
  pages={375\ndash 384},
}

\bib{OdellSchlumprecht2002}{article}{
  author={Odell, E.},
  author={Schlumprecht, Th.},
  title={Trees and branches in Banach spaces},
  journal={Trans. Amer. Math. Soc.},
  volume={354},
  date={2002},
  pages={4085--4108 (electronic)},
}

\bib{Oja2000}{article}{
  author={Oja, E.},
  title={Geometry of Banach spaces having shrinking approximations of the identity},
  journal={Trans. Amer. Math. Soc.},
  volume={352},
  date={2000},
  pages={2801--2823},
}

\bib{Pelczynski1958}{article}{
  author={Pe{\l }czy{\'n}ski, A.},
  title={A connection between weakly unconditional convergence and weakly completeness of Banach spaces},
  language={English, with Russian summary},
  journal={Bull. Acad. Polon. Sci. S\'er. Sci. Math. Astr. Phys.},
  volume={6},
  date={1958},
  pages={251\ndash 253 (unbound insert)},
}

\bib{Rosenthal1974}{article}{
  author={Rosenthal, H. P.},
  title={A characterization of Banach spaces containing $l\sp {1}$},
  journal={Proc. Nat. Acad. Sci. U.S.A.},
  volume={71},
  date={1974},
  pages={2411\ndash 2413},
}

\bib{Sims1994}{article}{
  author={Sims, B.},
  title={A class of spaces with weak normal structure},
  journal={Bull. Austral. Math. Soc.},
  volume={49},
  date={1994},
  pages={523--528},
}

\bib{Singer1970}{book}{
  author={Singer, I.},
  title={Bases in Banach spaces. I},
  note={Die Grundlehren der mathematischen Wissenschaften, Band 154},
  publisher={Springer-Verlag},
  place={New York},
  date={1970},
}

\bib{Sobczyk1941}{article}{
  author={Sobczyk, A.},
  title={Projection of the space $(m)$ on its subspace $(c\sb 0)$},
  journal={Bull. Amer. Math. Soc.},
  volume={47},
  date={1941},
  pages={938\ndash 947},
}

\bib{Zippin1988}{article}{
  author={Zippin, M.},
  title={Banach spaces with separable duals},
  journal={Trans. Amer. Math. Soc.},
  volume={310},
  date={1988},
  pages={371--379},
}

\end{biblist}
\end{bibsection}

\end{document}